\numberwithin{equation}{subsection}
\theoremstyle{plain}
\newtheorem{thm}[equation]{Theorem}
\newtheorem{prop}[equation]{Proposition}
\newtheorem{cor}[equation]{Corollary}
\theoremstyle{definition}
\newtheorem{defn}[equation]{Definition}
\newtheorem{quest}[equation]{Question}
\newcommand{\bC}{\mathbb{C}}
\newcommand{\bD}{\mathbb{D}}
\newcommand{\bF}{\mathbb{F}}
\newcommand{\bN}{\mathbb{N}}
\newcommand{\bP}{\mathbb{P}}
\newcommand{\bQ}{\mathbb{Q}}
\newcommand{\bR}{\mathbb{R}}
\newcommand{\bS}{\mathbb{S}}
\newcommand{\bZ}{\mathbb{Z}}
\newcommand{\gE}{\mathbf{E}}
\newcommand{\cA}{\mathcal{A}}
\newcommand{\cL}{\mathcal{L}}
\newcommand{\cP}{\mathcal{P}}
\newcommand{\actson}{\curvearrowright}
\newcommand{\ch}{\operatorname{ch}}
\newcommand{\Cob}{\operatorname{Cob}}
\newcommand{\cotanh}{\operatorname{coth}}
\newcommand{\BSO}{\operatorname{BSO}}
\newcommand{\dash}{^{\prime}}
\newcommand{\diag}{\operatorname{diag}}
\newcommand{\Diff}{\operatorname{Diff}}
\newcommand{\Eig}{\operatorname{Eig}}
\newcommand{\eins}{\operatorname{\mathbf{1}}}
\newcommand{\eul}{\operatorname{Eul}}
\newcommand{\Fr}{\operatorname{Fr}}
\newcommand{\Fred}{\operatorname{Fred}}
\newcommand{\Gl}{\operatorname{GL}}
\newcommand{\hq}{/\hspace{-1.2mm}/}
\newcommand{\ind}{\operatorname{ind}}
\newcommand{\id}{\operatorname{id}}
\newcommand{\kerv}{\operatorname{Kerv}}
\newcommand{\loopinf}{\Omega^{\infty}}
\newcommand{\MNSO}{\operatorname{MNSO}}
\newcommand{\MTSO}{\operatorname{MTSO}}
\newcommand{\MSO}{\operatorname{MSO}}
\newcommand{\Or}{\operatorname{Or}}
\newcommand{\path}{\operatorname{Path}}
\newcommand{\proj}{\operatorname{proj}}
\newcommand{\PT}{\operatorname{PT}}
\newcommand{\sign}{\operatorname{sign}}
\newcommand{\suspinf}{\Sigma^{\infty}}
\newcommand{\Spec}{\operatorname{Spec}}
\newcommand{\symb}{\operatorname{smb}}
\newcommand{\thom}{\operatorname{th}}
\newcommand{\td}{\operatorname{td}}
\newcommand{\Td}{\operatorname{Td}}
\newcommand{\Th}{\operatorname{Th}}
\newcommand{\bTh}{\mathbb{T}\mathbf{h}}
\newcommand{\trf}{\operatorname{trf}}
\address{Mathematisches Institut der Universit\"at Bonn,
Endenicher Allee 60, 53115 Bonn, Bundesrepublik Deutschland}
\email{ebert@math.uni-bonn.de}
\subjclass{55R40, 58J20, 58J20, 57R90}
\begin{document}
\vspace*{-1cm}

\title[Vanishing theorem]{A vanishing theorem for
characteristic classes of odd-dimensional manifold bundles}

\author{Johannes Ebert}

\begin{abstract}
We show how the Atiyah-Singer family index theorem for both, usual
and self-adjoint elliptic operators fits naturally into the
framework of the Madsen-Tillmann-Weiss spectra. Our main theorem
concerns bundles of odd-dimensional manifolds. Using completely
functional-analytic methods, we show that for any smooth proper
oriented fibre bundle $E \to X$ with odd-dimensional fibres, the
family index $\ind (B) \in K^1 (X)$ of the odd signature operator is
trivial. The Atiyah-Singer theorem allows us to draw a topological
conclusion: the generalized Madsen-Tillmann-Weiss map $\alpha: B
\Diff^+ (M^{2m-1}) \to \loopinf \MTSO(2m-1)$ kills the Hirzebruch
$\cL$-class in rational cohomology. If $m=2$, this means that
$\alpha$ induces the zero map in rational cohomology. In particular,
the three-dimensional analogue of the Madsen-Weiss theorem is wrong.
For $3$-manifolds $M$, we also prove the triviality of $\alpha: B
\Diff^+ (M) \to \MTSO (3)$ in mod $p$ cohomology in many cases. We
show an appropriate version of these results for manifold bundles
with boundary.
\end{abstract}

\maketitle
\setcounter{tocdepth}{1}
\tableofcontents

\section{Introduction and statement of
results}\label{introductionsection}

One of the greatest achievements of algebraic topology in the last
decade are the two proofs of Mumford's conjecture on the homology of
the stable mapping class group by Madsen and Weiss \cite{MW} and by
Galatius, Madsen, Tillmann and Weiss \cite{GMTW}. The
Pontrjagin-Thom construction is crucial for both proofs; it provides
a map from the classifying space of the diffeomorphism group of a
compact surface to the infinite loop space $\loopinf \MTSO (2)$ of
the Madsen-Tillmann-Weiss spectrum, in other words the Thom spectrum of
the inverse of the universal complex line bundle.

The proof in \cite{GMTW} consists of two parts. One part
(essentially due to Tillmann \cite{Till}), exclusively applies to
$2$-dimensional manifolds, because it relies on two deep results of
surface theory (the Harer-Ivanov homological stability theorem and
the Earle-Eells theorem on the contractibility of the components of
the diffeomorphism group of surfaces of negative Euler number). The
other part of the proof, however, is valid for manifolds of
arbitrary dimension and with general ''tangential structures'' and
provides a vast generalization of the classical Pontrjagin-Thom
theorem relating bordism theory of smooth manifolds and stable
homotopy.

Given an oriented (we will ignore more general tangential structures
throughout the present paper) closed manifold $M$ of dimension $n$,
there exists a map

\begin{equation}\label{mtmap}
\alpha_{E_M}: B \Diff^+ (M) \to \loopinf \MTSO(n),
\end{equation}

where $\MTSO(n)$ denotes the Thom spectrum of the inverse of the
universal $n$-dimensional oriented vector bundle. Let $\Cob_{n}^{+}$
be the oriented $n$-dimensional cobordism category: objects are
closed $(n-1)$-dimensional manifolds, morphisms are oriented
cobordisms and composition is given by gluing cobordisms. With a
suitable topology on object and morphism spaces, $\Cob_{n}^{+}$
becomes a topological category. The maps $\alpha$ from \ref{mtmap}
assemble to a map

\begin{equation}\label{gmtwequivalence}
\alpha^{GMTW}: \Omega B \Cob_{n}^{+} \to \loopinf \MTSO(n),
\end{equation}

and the main result of \cite{GMTW} states that $\alpha^{GMTW}$ is a
homotopy equivalence. Moreover, for any closed $n$-manifold $M$,
there is a tautological map $\Phi_M: B \Diff^+ (M) \to \Omega B
\Cob_{n}^{+}$ and $\alpha^{GMTW} \circ \Phi_M = \alpha_{E_M}$.

The exclusive result for two-dimensional manifolds is that when $M$
is a closed connected oriented surface of genus $g$, then $\Phi_M$
induces an isomorphism on integral homology groups of degrees $*
\leq g/2-1$. Both theorems together provide an isomorphism of the
homology of $B \Diff^+ (M)$ and $\loopinf \MTSO(2)$ (in that range
of degrees).

In this paper, we study the map $\alpha_{E_M}$ (or, equivalently,
$\Phi_M$) when $M$ is an oriented closed manifold of \emph{odd}
dimension. It turns out that $\alpha_{E_M}$ fails to be an
isomorphism in homology in any range and that no clue about the
homology of $B \Diff^+ (M)$ can be derived from the study of
$\alpha_{E_M}$. This seems to be an unsatisfactory state of affairs
and therefore we attempt to arouse the reader's curiosity by the
following remark:

Even if the map $\alpha$ fails to be an ''equivalence'' of some
kind, it still contains interesting information about $B
\Diff^+(M)$. Any cohomology class of $\loopinf \MTSO(n)$ (in an
arbitrary generalized cohomology theory) yields, via $\alpha_{E_M}$,
a cohomology class of $B \Diff^+ (M)$, also known as a
characteristic class of smooth oriented $M$-bundles. One should
think of these characteristic classes as ''universal'' classes in
the sense that they are defined for \emph{all} oriented
$n$-manifolds and are defined using only the \emph{local} structure
of the manifold.

Examples are the \emph{generalized MMM-classes} (this is the
abbreviation of the names Mumford, Miller, Morita)

\[
f_! (c (T_v E)) \in H^{*-n} (B;R),
\]

where $f: E \to B$ is a smooth oriented fibre bundle with vertical
tangent bundle $T_v E$, $R$ is a ring and $c \in H^*(BSO(n); R)$ is
a characteristic class of oriented vector bundles. The generalized
MMM-classes come from spectrum cohomology classes of $\MTSO(n)$.

Other examples come from index theory of elliptic operators. Any
sufficiently natural elliptic differential operator on oriented
$n$-manifolds defines a characteristic class in $K^0$ (namely, the
family index). Likewise, a natural self-adjoint elliptic operator
has a family index in $K^{-1}$ and so it defines a characteristic
class in $K^{-1}$. An application of the Atiyah-Singer Index theorem
shows that these index-theoretic classes also come from $\MTSO(n)$.

On any closed oriented Riemannian manifold of odd
dimension, there is the \emph{odd signature operator} $D: \cA^{ev}
(M) \to \cA^{ev} (M)$ on forms of even degree. It is self-adjoint,
elliptic and its kernel is the space of harmonic form of even
degree, which is isomorphic to $H^{ev}(M; \bC)$. Given any smooth
oriented $M$-bundle $f:E \to B$ we can choose a Riemannian metric on
the fibres and study the induced family of elliptic self-adjoint
operators. Here is the central result of the present paper.

\begin{thm}\label{mainresult}
The family index of the odd signature operator on an oriented bundle
$E \to B$ with odd-dimensional fibres is trivial, $\ind (D)=0 \in
K^1 (B)$.
\end{thm}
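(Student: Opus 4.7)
The plan is to realize $K^1(B) \cong [B, \Fred^{sa}_*(\mathcal{H})]$, the homotopy classes of maps to the space of bounded self-adjoint Fredholm operators on a separable complex Hilbert space $\mathcal{H}$ with both essentially positive and essentially negative spectrum. After applying the bounded transform $D \mapsto D(1+D^2)^{-1/2}$, the family $\{D_b\}_{b\in B}$ of odd signature operators induces a continuous map $\varphi\colon B \to \Fred^{sa}_*(\mathcal{H})$, where $\mathcal{H}$ is the Hilbert bundle $L^2(E/B, \Lambda^{ev}T^*_v E \otimes \bC)$. It suffices to show that $\varphi$ is null-homotopic.

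The decisive structural input is a canonical anti-linear symmetry. Pointwise complex conjugation $c$ on the complexified exterior bundle is an isometric involution with $c^2 = 1$. Because the self-adjoint operator $D$ arises by inserting an odd power of $i$ into an otherwise real-linear first-order differential operator built from $d$, $d^*$ and the Hodge star, one computes that $cDc^{-1} = -D$ on every fibre. Thus $\varphi$ factors through the subspace
\[
\Fred^{sa,c}_*(\mathcal{H}) := \{A \in \Fred^{sa}_*(\mathcal{H}) : cAc^{-1} = -A\}.
\]
Writing $A = iN$ with $N$ acting on the $c$-fixed real subspace $\mathcal{H}^c \subset \mathcal{H}$ identifies $\Fred^{sa,c}_*(\mathcal{H})$ with the space of real skew-adjoint Fredholm operators on $\mathcal{H}^c$.

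The main technical step is then to produce a null-homotopy of the inclusion $\Fred^{sa,c}_*(\mathcal{H}) \hookrightarrow \Fred^{sa}_*(\mathcal{H})$. The guiding principle is that the anti-symmetry $cAc^{-1} = -A$ forces $A$ and $-A$ to have identical spectrum with identical multiplicities, so after stabilization---tensoring $\mathcal{H}$ with $\bC^2$ and extending $c$ coordinate-wise---the operator $A \otimes \eins_2$ becomes continuously deformable, through $c$-anti-invariant self-adjoint Fredholm operators, to $A \oplus (-A)$, whose $K^1$-class is visibly zero. Using the anti-linear symmetry to make this deformation canonical in $A$, and hence continuous in $b \in B$, yields the required null-homotopy of $\varphi$.

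The principal obstacle is that the naive doubling argument by itself only delivers the 2-torsion statement $2\ind(D) = 0$ rather than the integral vanishing $\ind(D) = 0$, since the inclusion $\Fred^{sa,c}_* \hookrightarrow \Fred^{sa}_*$ has a priori the potential to carry nontrivial complexification classes (cf.\ the complexification map $KO^{-1} \to K^{-1}$ on higher homotopy groups). Promoting the doubling argument to a genuine null-homotopy requires a careful functional-analytic analysis of the homotopy type of $\Fred^{sa,c}_*$ together with uniform Fredholm and spectral estimates along the constructed deformation. The technical heart of the proof is thus a purely operator-theoretic lemma asserting that the classifying map of a family of self-adjoint Fredholm operators anti-commuting with a fixed anti-linear involution $c$ with $c^2=1$ is canonically null-homotopic in $\Fred^{sa}_*$.
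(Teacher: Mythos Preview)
Your approach has a genuine gap at two independent points.

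First, the claimed relation $cDc^{-1}=-D$ does not hold in all odd dimensions. On a $(2m-1)$-manifold the odd signature operator is $D=i^{m}(-1)^{p+1}(\ast d-d\ast)$. When $m$ is even (i.e.\ $\dim M\equiv 3\pmod 4$, in particular the $3$-dimensional case) the factor $i^{m}$ is real, so $D$ \emph{commutes} with complex conjugation rather than anti-commuting with it. Your symmetry argument therefore does not even get started in half of the odd dimensions.

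Second, even in dimensions $\equiv 1\pmod 4$, where $cDc^{-1}=-D$ does hold, the ``operator-theoretic lemma'' you isolate as the technical heart is actually false. Identifying $\Fred^{sa,c}_{*}$ with real skew-adjoint Fredholm operators (as you do) exhibits the inclusion $\Fred^{sa,c}_{*}\hookrightarrow\Fred^{sa}_{*}$ as the complexification map $KO^{-1}\to K^{-1}$ on classifying spaces. This map is not null-homotopic: on $\pi_{3}$ it is the complexification $KO^{-4}(\ast)\to K^{-4}(\ast)$, which is multiplication by $2$ on $\bZ$. So the obstacle you flag in your final paragraph is fatal, not merely technical. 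In fact the paper proves (Section~\ref{realsection}) that the real refinement $\ind_{\bR}(D)\in KO^{-1}(B)$ is in general \emph{nonzero}; the first obstruction is the Kervaire semi-characteristic $\kerv(M)=\dim H^{ev}(M;\bR)\pmod 2$, which already fails for $M=\bS^{1}$.

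The paper's proof uses a completely different, and more specific, property of the odd signature operator: by Hodge theory $\ker D_{b}\cong H^{ev}(E_{b};\bC)$, so $b\mapsto\dim\ker D_{b}$ is \emph{locally constant}. The entire argument then reduces to a general functional-analytic lemma (Theorem~\ref{spectralgap}): any continuous family $B\to\Fred^{0}_{s.a.}(H)$ with locally constant kernel dimension is null-homotopic. One first pushes the zero eigenvalue off $0$ using the kernel projection (which varies continuously precisely because the kernel dimension is constant), then deforms via the functional calculus to a family of self-adjoint involutions, and finally invokes Kuiper's theorem to see that the space of such involutions is contractible. No reality structure enters.
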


The proof of this result is entirely \emph{analytic}; it is based on
the fact that the kernel dimension of $D$ is constant. Therefore the
Atiyah-Singer index theorem allows us to draw topological
conclusions from Theorem \ref{mainresult}. Here is one of them:

\begin{thm}\label{vanishing}
For any closed oriented $2m-1$-dimensional manifold $M$, the
Madsen-Tillmann-Weiss map $\suspinf (B \Diff^+(M))_+ \to  \MTSO(2m-1)$
kills the Hirzebruch $\cL$-class $\thom_{-L_{2m-1}} \cL \in
H^{4*-2m+1} (\MTSO (2m-1); \bQ)$.

In particular, for any oriented smooth fibre bundle $f: E \to B$
with fibre $M$, the generalized MMM-class $f_{!} (\cL (T_v E)) \in
H^* (B; \bQ)$ is trivial.
\end{thm}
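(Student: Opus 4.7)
The plan is to convert the analytic vanishing statement of Theorem \ref{mainresult} into a cohomological statement by means of the Chern character together with the Atiyah-Singer family index theorem for self-adjoint elliptic operators. Since $\ind D = 0 \in K^1(B)$ by Theorem \ref{mainresult}, its Chern character vanishes as well: $\ch(\ind D) = 0 \in H^{\mathrm{odd}}(B; \bQ)$ for the family $D$ of odd signature operators on any oriented smooth fibre bundle $E \to B$ with closed $(2m-1)$-dimensional fibres.

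The key identification is that $\ch(\ind D)$ coincides, up to a fixed nonzero rational scalar, with the fibre integral $f_!(\cL(T_v E)) \in H^{4*-2m+1}(B; \bQ)$. This is the Atiyah-Singer family index formula applied to the odd signature operator: the local symbol integrand is, by a computation parallel to that which underlies Hirzebruch's signature theorem in the even-dimensional case, the Hirzebruch $\cL$-class of the vertical tangent bundle. Combining the two observations yields the ``in particular'' clause of the theorem: $f_!(\cL(T_v E)) = 0$ for every such bundle $f: E \to B$.

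To promote this to the spectrum-level statement, recall that $\thom_{-L_{2m-1}} \cL \in H^{4*-2m+1}(\MTSO(2m-1); \bQ)$ is the universal MMM-class for the $\cL$-class: by the Pontrjagin-Thom construction, its pullback along $\alpha_{E_M}: B\Diff^+(M) \to \loopinf \MTSO(2m-1)$ is precisely $f_!(\cL(T_v E_M))$ for the universal $M$-bundle $E_M \to B\Diff^+(M)$. Applying the preceding vanishing to $E_M$ shows that $\alpha_{E_M}^*$ kills $\thom_{-L_{2m-1}} \cL$, which is equivalent to the statement that the adjoint spectrum map $\suspinf (B\Diff^+(M))_+ \to \MTSO(2m-1)$ sends this class to zero. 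The general bundle case of the ``in particular'' clause then follows by naturality, since any such bundle is pulled back from the universal one.

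The substantive work is concentrated in the Atiyah-Singer index formula for self-adjoint (odd) elliptic operator families and the identification of the odd signature operator's symbol integrand with the Hirzebruch $\cL$-class; both of these inputs are to be developed in the paper's index-theoretic sections. Once they are available, the deduction of Theorem \ref{vanishing} from Theorem \ref{mainresult} is a purely formal naturality argument, with the Chern character providing the rational bridge from $K^1$ to ordinary cohomology.
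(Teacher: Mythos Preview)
Your proposal is correct and follows essentially the same approach as the paper: vanishing in $K^1$ from Theorem \ref{mainresult}, Atiyah--Singer to identify the index with the pullback of the universal symbol class, Chern character to pass to rational cohomology, and the symbol computation identifying the result with the $\cL$-class up to nonzero rational scalars. The only organizational difference is that the paper first reformulates the vanishing at the spectrum level (Theorem \ref{symbolgoestonull}: $\alpha_E^*\thom(\sigma_{2m-1})=0$) and then computes $\ch(\thom\,\sigma_{2m-1})$ by routing through the even signature operator --- using that $\sigma_{2m}$ restricts to $2\sigma_{2m-1}$ and that $\ch(\thom\,\sigma_{2m})=\tilde{\cL}$ is the classical computation from \cite{ASIII} --- whereas you take the Chern character of the index directly and invoke the cohomological index formula; these are the same argument in different order.
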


The precise meaning of this theorem will be clarified in the main
text. If $m=2$, Theorem \ref{vanishing} implies:

\begin{cor}\label{threedim}
If $\dim M= 3$, the Madsen-Tillmann-Weiss map $\alpha: B \Diff^+(M) \to
\loopinf \MTSO(3)$ is trivial in rational cohomology (in positive
degrees).
\end{cor}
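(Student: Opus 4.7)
The plan is to reduce to a computation in $H^*(\MTSO(3); \bQ)$ and then invoke Theorem \ref{vanishing}. The key point, special to dimension $3$, is that the components of the Hirzebruch $\cL$-class already exhaust the positive-degree rational cohomology of $\MTSO(3)$, so Theorem \ref{vanishing} kills every positive-degree generator.

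By the Thom isomorphism for the virtual bundle $-L_3\to \BSO(3)$, one has $\tilde H^j(\MTSO(3);\bQ) \cong H^{j+3}(\BSO(3); \bQ)$. Since $H^*(\BSO(3);\bQ)=\bQ[p_1]$ with $p_1$ of degree $4$, the cohomology $\tilde H^*(\MTSO(3);\bQ)$ is one-dimensional in degrees $-3,1,5,9,\dots$ and zero in all other degrees, the degree $4k-3$ summand being spanned by $\thom_{-L_3}(p_1^k)$. A rank $3$ oriented bundle has $p_j=0$ for $j\ge 2$, so the universal $\cL$-class of $L_3$ is a power series in $p_1$ alone, with coefficients agreeing up to nonzero constants with those of $x/\tanh x$ in $x^{2k}$. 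The latter are classically expressible through the nonvanishing Bernoulli numbers $B_{2k}$, so the family $\{\thom_{-L_3}(\cL_k)\}_{k\ge 1}$ is a $\bQ$-basis of $\tilde H^{>0}(\MTSO(3);\bQ)$.

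To pass from the spectrum to $\loopinf \MTSO(3)$, observe that by the stable Hurewicz theorem $\pi_n(\loopinf \MTSO(3))\otimes \bQ$ is $\bQ$ in degrees of the form $4k-3$ ($k \ge 1$) and zero in other positive degrees. All these degrees are odd, so any path component is rationally a product of odd-dimensional Eilenberg--MacLane spaces, and its positive-degree rational cohomology is the exterior algebra on the images of $\thom_{-L_3}(p_1^k)$ under the evaluation $\suspinf (\loopinf \MTSO(3))_+ \to \MTSO(3)$. By Theorem \ref{vanishing} the map $\alpha^*$ kills each $\thom_{-L_3}(\cL_k)$, hence by the preceding step kills each algebra generator of $H^{>0}(\loopinf \MTSO(3);\bQ)$, and is therefore zero on all of $H^{>0}(\loopinf \MTSO(3);\bQ)$.

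The main obstacle is the middle step; it succeeds only because a rank $3$ oriented bundle has a single Pontryagin generator, so the single genus $\cL$ already detects all of $\bQ[p_1]$. For $\dim M >3$ additional independent Pontryagin classes appear in $H^*(\BSO(2m-1);\bQ)$ that $\cL$ alone cannot separate, which is precisely why the analogous full vanishing statement does not follow from Theorem \ref{mainresult} in higher odd dimensions.
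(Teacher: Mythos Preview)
Your proof is correct and follows essentially the same route as the paper: both identify $H^*(\BSO(3);\bQ)=\bQ[p_1]$, observe that the nonvanishing of the Bernoulli numbers makes the components of $\cL$ an additive basis, and then pass from the spectrum to the infinite loop space. The only cosmetic difference is that the paper invokes the Milnor--Moore isomorphism \ref{milmooreiso} directly for this last step, whereas you rephrase the same fact via rational homotopy theory (stable Hurewicz plus the decomposition of a rational infinite loop space into Eilenberg--MacLane factors); since all the generators sit in odd degrees, your ``exterior algebra'' description agrees with the free graded-commutative algebra of \ref{milmooreiso}.
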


This is an amusing result. Recently, Hatcher and Wahl \cite{HW}
showed an analogue of the Harer-Ivanov homological stability for
mapping class groups of $3$-manifolds. Moreover, for large classes
$3$-dimensional manifolds, it is known that the components of the
diffeomorphism group are contractible (but that tends to become
wrong after stabilization). One might be tempted to think that these
results helps to make the proof of the analogue of the Mumford
conjecture valid, leading to a description of the stable homology of
mapping class groups of $3$-manifolds in terms of the homology of
$\loopinf \MTSO(3)$. Corollary \ref{threedim} shows that this is not
the case.

Here is another consequence of Theorem \ref{vanishing}:

\begin{cor}\label{Lueckranicki}
Let $E \to B$ be an oriented fibre bundle over a closed oriented
manifold with odd-dimensional closed fibres. Then $\sign (E) =0$.
\end{cor}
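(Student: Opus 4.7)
The plan is to deduce this corollary from Theorem \ref{vanishing} via the Hirzebruch signature theorem together with the Fubini/push-pull formula for integration along the fibres. The main work has already been done by Theorem \ref{vanishing}; Corollary \ref{Lueckranicki} is meant to be a direct cohomological consequence.

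First, I would observe that, as oriented vector bundles over $E$, there is a splitting $TE \cong T_v E \oplus f^* TB$, so by the multiplicativity of the Hirzebruch $\cL$-class one has $\cL(TE) = \cL(T_v E) \cdot f^* \cL(TB)$ in $H^*(E;\bQ)$. Since $E$ is a closed oriented manifold (being the total space of a smooth fibre bundle of closed oriented manifolds over a closed oriented base), Hirzebruch's signature theorem gives
\begin{equation*}
\sign(E) = \langle \cL(TE), [E] \rangle = \langle \cL(T_v E) \cdot f^* \cL(TB), [E] \rangle.
\end{equation*}
Note that $\dim E = \dim B + (2m-1)$ must be divisible by $4$ for $\sign(E)$ to be non-zero, and in that case $\dim B \equiv 2m+1 \pmod 4$; this is consistent with the degrees below but plays no further role.

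Next, I would apply the projection formula for the fibre integration $f_! : H^*(E;\bQ) \to H^{*-(2m-1)}(B;\bQ)$, which yields
\begin{equation*}
\langle \cL(T_v E) \cdot f^* \cL(TB), [E] \rangle = \langle f_!(\cL(T_v E)) \cdot \cL(TB), [B] \rangle.
\end{equation*}
Now Theorem \ref{vanishing} tells us precisely that $f_!(\cL(T_v E)) = 0$ in $H^*(B;\bQ)$. Therefore the right-hand pairing vanishes, and hence $\sign(E) = 0$ in $\bQ$, which forces $\sign(E) = 0$ in $\bZ$.

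There is essentially no obstacle: the whole statement is analytical and topological content packaged into Theorem \ref{vanishing}, and the present corollary only verifies that the classical index-theoretic computation of the signature factors through the generalized MMM-class $f_!(\cL(T_v E))$. The only care needed is to use the correct splitting $TE \cong T_v E \oplus f^* TB$ of oriented bundles and the sign-consistent projection formula for $f_!$; both are standard.
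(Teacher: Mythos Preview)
Your proof is correct and essentially identical to the paper's own argument: split $TE \cong T_v E \oplus f^* TB$, use multiplicativity of $\cL$ and Hirzebruch's theorem, apply the projection formula to get $\sign(E) = \langle \cL(TB)\, f_!(\cL(T_v E)), [B]\rangle$, and invoke Theorem~\ref{vanishing}. There is nothing to add.
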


This is an old theorem, which was first mentioned without proof by
Atiyah \cite{Atfib} (perhaps the proof Atiyah had in mind is along
the lines of the argument of the present paper). Proofs of
\ref{Lueckranicki} were given by Meyer \cite{Mey} and L\"uck/Ranicki
\cite{LR}. In fact, \ref{vanishing} and \ref{Lueckranicki} are
equivalent, as we will see in subsection \ref{applications}.

There is a version of Theorem \ref{vanishing} for manifold bundles
with boundary, such that the boundary is trivialized (section
\ref{boundarysection}).

In dimensions of the form $4r+1$, there is a real refinement of
Theorem \ref{mainresult}. More precisely, the odd signature operator
has an index in real K-theory. This real index, however, is usually
not zero. This is discussed in section \ref{realsection}

Theorem \ref{mainresult} is stronger than \ref{vanishing}, because
it also has consequences in mod $p$-cohomology. We prove two things
for oriented $3$-manifolds in that direction. Fix an oriented
$3$-manifold $M$. We will prove (in section \ref{vanishingtorsion}):

\begin{itemize}
\item Fix $k \geq 1$. Then for almost all odd primes $p$, the map
$\alpha^*: H^{4k-1} (\MTSO(3); \bF_p) \to H^{4k-1} (B \Diff^+ (M);
\bF_p)$ is zero (Theorem \ref{adamsmethod}).
\item Fix an odd prime $p$. Then $\alpha^*: H^{4k-1} (\MTSO(3); \bF_p) \to H^{4k-1} (B \Diff^+ (M);
\bF_p)$ is zero for an infinite number of values for $k$ (Theorem
\ref{wumethod}).
\end{itemize}

In both cases, the primes to which the theorem applies does not
depend on $M$.

In a companion paper \cite{Eb} we show that all cohomology classes
in $H^{* > 0} (\MTSO(2m); \bQ)$ are detected on some bundle of
$2m$-manifolds and that all classes in $H^{* > 0} (\MTSO(2m+1);
\bQ)$ which are not multiples of the Hirzebruch $\cL$-class are
detected on some $2m+1$-dimensional bundle. This means that Theorem
\ref{vanishing} is the only vanishing theorem of this type.

\subsection{Outline of the paper}

Section \ref{backgroundmtspectra} is a survey on the stable homotopy
theory which is needed in this paper. We briefly discuss general
Thom spectra, the Madsen-Tillmann-Weiss spectra, the Pontrjagin-Thom
construction, the Madsen-Tillmann-Weiss map and Thom isomorphisms.
Subsection \ref{homotopyproperties} is devoted to a study of the
component group $\pi_0 (\MTSO(n))$. This is needed later in section
\ref{realsection}. Section \ref{backgrounindex} provides the
necessary constructions from index theory. In section
\ref{oddsigop}, we discuss the odd signature operator and prove
Theorem \ref{mainresult}. Also, we show \ref{vanishing} and
\ref{threedim}. Section \ref{realsection} discusses the real index
of the odd signature operator. Finally, in section
\ref{vanishingtorsion}, we discuss the vanishing theorem in finite
characteristic. Section \ref{boundarysection} discusses the
extension of the results to the bounded case.

\subsection{Acknowledgements}

The author is indebted to a number of people for enlightening
discussions about the mathematics in this paper. Among them are
Oscar Randal-Williams, Ulrike Tillmann, Ib Madsen and Ulrich Bunke.
Last but not least, I have to acknowledge the hospitality of the
Mathematical Institute of the University of Oxford and the financial
support from the Postdoctoral program of the German Academic
Exchange Service (DAAD) which I enjoyed when this project was begun.

\section{Background material on Madsen-Tillmann-Weiss
spectra}\label{backgroundmtspectra}

In this section, we review some material on the
Madsen-Tillmann-Weiss spectra. Most of the results are standard
except perhaps those concerning the component group of $\MTSO (n)$
in \ref{homotopyproperties}. For the subsections \ref{thomspectra},
\ref{orientation} and \ref{pontthomsection}, the reader is referred
to the textbook \cite{Rudyak} for proofs and much more details.

\subsection{Stable vector bundles and their Thom
spectra}\label{thomspectra}

For our purposes, a \emph{stable vector bundle} $V$ on a space $X$
is a map $\xi_V:X \to \bZ \times BO$. The \emph{rank} of $V$ is the
locally constant function $X \stackrel{\xi_V}{\to} \bZ \times BO \to
\bZ$. Given two ordinary vector bundles $V_i \to X$ of rank $r_i$,
$i=0,1$, we can form their formal difference by the following
procedure. Let $\mu: \bZ \times BO \times \bZ \times  BO \to \bZ
\times  BO$ be the Whitney sum map and $\iota: \bZ \times  BO \to
\bZ \times  BO$ the inversion map. Furthermore, let $\xi_i: X \to
\bZ \times BO$ be classifying maps for $V_i$ (composed with the
inclusion $BO(r_i) \to BO$), then $V_0 - V_1$ is the stable vector
bundle which is given by the composition

\[
X \stackrel{\xi_0,\iota \circ \xi_1}{\to} \bZ \times BO \times \bZ
\times BO \stackrel{\mu}{\to} \bZ \times BO.
\]

The rank of $V_0 - V_1$ is $r_0-r_1$. Clearly, we can add and
subtract stable vector bundles by means of the maps $\mu$ and
$\iota$. Furthermore, any ordinary vector bundle can be considered
as a stable vector bundle.

The \emph{Thom space} of a vector bundle $V \to X$ is the space $\Th
(V) = X^V = \bD (V) / \bS (V)$, the quotient of the unit disc bundle
by the unit sphere bundle. The \emph{Thom spectrum} $\bTh(W)$ of a
stable vector bundle $W$ of rank $d$ is produced as follows. Let
$X_n := \xi_{W}^{-1}(\{d \} \times BO_{d+n})$; these subspaces form
an exhaustive filtration $X_{-d} \subset X_{1-d} \subset \cdots
\subset X$.  Let $W_n := \xi_{W}^{*} L_{d+n}$ be the pullback of the
$d+n$-dimensional universal vector bundle. Clearly, there is an
isomorphism $W_{n+1} |_{X_n} \cong \bR \oplus W_n$. The $n^{th}$
space of $\bTh(W)$ is the Thom space $X_n^{W_n}:= \bD (W_n) / \bS
(W_n)$ of $W_n$ and the structure maps are
\[
\Sigma X_n^{W_n} \cong X_n^{\bR \oplus W_n } \cong
X_n^{W_{n+1}|_{X_n}} \hookrightarrow X_{n+1}^{W_{n+1}}.
\]
The homotopy type of the spectrum $\bTh(W)$ depends only on the
homotopy class of $\xi_{W}$. Furthermore, if $W$ is an ordinary
vector bundle, then the Thom spectrum is homotopy equivalent to the
suspension spectrum $\suspinf X^{W}$ of the Thom space of $W$. In
particular, the Thom spectrum of the trivial $0$-dimensional bundle
$\underline{0}$ on $X$ is $\suspinf X_+$. Let $W$ be a stable vector
bundle and $V$ an ordinary vector bundle. Given the description
above, it is not hard to see that there is an inclusion map

\[
\bTh (W) \to \bTh (W \oplus V).
\]

Let $V \to X; W \to Y$ be two stable vector bundles. There is a
canonical homotopy equivalence $\bTh (V) \wedge \bTh (W) \simeq \bTh
(V \times W)$. If $X=Y$, we get a diagonal map $\diag:\bTh (V \oplus
W) \to \bTh (V) \wedge \bTh (W)$. A special case is the diagonal
$\bTh (V) \to \suspinf X_+ \wedge \bTh (V)$.

\subsection{Orientations and Thom isomorphisms}\label{orientation}

Assume that $A$ is an associative and commutative ring spectrum with
unit (the rather old-fashioned notion of \cite{Adams} is sufficient
for our purposes). Let $V \to X$ be a stable vector bundle of rank
$d \in \bZ$. The cohomology $A^* (\bTh(V))$ is a graded left $A^*
(X)$-module; a pair $(x,y) \in A^n (X) \times A^m (\bTh (V))$ is
sent to the composition
\[
x \cdot y:\bTh (V) \stackrel{\diag}{\to}  \suspinf X_+ \wedge \bTh
(V) \stackrel{x \wedge y}{\to} \Sigma^n A \wedge \Sigma^m A {\to}
\Sigma^{n+m} A.
\]
A \emph{Thom class} or \emph{$A$-orientation} of $V$ with
$A$-coefficients is a cohomology class $v \in A^d (\bTh (V))$ such
that for any $x \in X$, the image of $v$ under the restriction map
$A^d (\bTh (V)) \to A^d (\bTh (V_x)) \cong A^d (\bS^d) \cong A^0
(*)$ is a unit. This is equivalent to saying that $A^* (\bTh (V))$
is a free $A^*(X)$-module on the generator $v$ or that he map
$\thom_{V}^{A}:A^* (X) \to A^{*+d} (\bTh (V))$; $x \mapsto x \cdot
v$ is an isomorphism. If this is the case, then $\thom_{V}^{A}$ is
called the \emph{Thom isomorphism}. If $A$ is understood, then the
superscript is often omitted.

More generally, we can define a \emph{relative} Thom isomorphism.
Let $V$ be a stable vector bundle of rank $d$ and let $W$ be another
stable vector bundle of rank $e$. Assume that $V$ has a Thom class
$v$. Let $\thom_{W,W \oplus V}^{A}:A^* (\bTh (W)) \to A^{*+d} (\bTh
(W \oplus V))$ be the homomorphism which maps $x \in A^{n}(\bTh
(W))$ to the composition

\[
\bTh (W \oplus V) \stackrel{\diag}{\to} \bTh (W) \wedge \bTh (V)
\stackrel{x \wedge v}{\to} \Sigma^{n} A \wedge \Sigma^{d} A \to
\Sigma^{n+d} A;
\]
this is an isomorphism of $A^{*}(X)$-modules. If $v \in
A^{d}(\bTh(V))$ and $w \in A^{e} (\bTh (W))$ are Thom classes, then
$\thom_{W, W \oplus V}^{A}(v)$ is a Thom class for $V \oplus W$. If
the Thom classes of different stable vector bundles are chosen
compatibly in this way, then the Thom isomorphisms are compatible in
the sense that $\thom_{U \oplus V, U \oplus V \oplus W} \circ
\thom_{U, U \oplus V} = \thom_{U, U \oplus V \oplus W}$ whenever $U$
is an arbitrary stable vector bundle. We shall use the short
notation $\thom_{V}: A^*(\bTh (W)) \to A^*(\bTh (V \oplus W))$ if
$W$ is understood.

\textbf{Examples:} The examples of ring spectra which play a role in
this paper are Eilenberg-Mac Lane spectra $H R$ for commutative
rings $R$ as well as the complex $K$-theory spectrum $K$. It is
well-known that a vector bundle which is oriented in the ordinary
sense has a preferred\footnote{depending on the choice of a
generator of $H_1 (\bR; \bR\setminus 0; \bZ)$.} $H \bZ$-Thom class
and so it has a $H R$-Thom class for any ring $R$. A stable vector
bundle has an $H \bZ$-orientation if and only if $w_1 (V)=0$. Any
complex vector bundle has a $K$-orientation and so does every
complex stable vector bundle, i.e. a formal difference of complex
vector bundles. However, there are several choices for these
$K$-orientation. We follow the convention that the Thom class of a
complex vector bundle $\pi:V\to X$ of rank $n$ is represented by the
complex

\[
0 \to \pi^* \Lambda^0 V \stackrel{v \wedge}{\to} \pi^* \Lambda^1 V
\stackrel{v \wedge}{\to} \pi^* \Lambda^2 V \stackrel{}{\to} \ldots
\pi^* \Lambda^n V \to 0.
\]

The following observation is important for index theory. Let $V \to
X$ be a real vector bundle. Then $V \otimes \bC$ has a natural
$K$-orientation. Therefore there is a relative Thom isomorphism

\begin{equation}\label{relthomiso}
K^* (\bTh (-V)) \cong K^* (\bTh (- V \oplus V \otimes \bC)) \cong
K^* (\bTh (V)).
\end{equation}

In this equation we used Bott periodicity to identify $K^*$ with
$K^{*+2}$. We will do this throughout the whole paper.

\subsection{The Pontrjagin-Thom construction}\label{pontthomsection}

Let $M$ be a closed smooth oriented manifold of dimension $n$ and
let $\Diff^+ (M)$ be the group of diffeomorphisms of $M$ endowed
with the Whitney $C^{\infty}$-topology. We will study \emph{smooth
oriented $M$-bundles}, i.e. fibre bundles $f: E \to B$ with
structural group $\Diff^+ (M)$ and fibre $M$. Let $Q \to B$ be the
associated $\Diff(M)$-principal bundle. The \emph{vertical tangent
bundle} is the oriented vector bundle $T_v E :=Q \times_{\Diff^+
(M)} TM \to Q \times_{\Diff^+ (M)} M = E$. The \emph{normal bundle}
of $f$ is the stable vector bundle $\nu(f):= - T_v E$.

If $B$ is paracompact, then there is a fat embedding $j: E \to B
\times \bR^{\infty}$, i.e. $\proj \circ j =f$ and the image of $j$
has a tubular neighborhood $U$. Moreover, the space of such fat
embeddings is contractible. Collapsing everything outside $U$ to the
basepoint defines a map of spectra

\[
\PT_f:\suspinf B_+ \to \bTh (\nu(f)),
\]

the \emph{Pontrjagin-Thom map} (or PT-map, for short). The map
$\PT_f$ depends on a contractible space of choices. In particular,
its homotopy class only depends on $f$. For more details on the
PT-construction in this parameterized setting, see \cite{GMT},
section 3.

The Pontrjagin-Thom map can be used to define the \emph{umkehr map}
in generalized cohomology. Let $f: E \to B$ be a smooth fibre bundle
of dimension $n$, $A$ a ring spectrum and we assume that $\nu(f)$
has an $A$-orientation. The umkehr homomorphism $f_{!}:A^*(E) \to
A^{*-n}(B)$ is defined as the composition

\begin{equation}\label{defumkehr}
A^* (E) \stackrel{\thom_{\nu(f)}}{\to} A^{*-n} (\bTh (\nu(f)))
\stackrel{\PT_{f}^{*}}{\to} A^{*-n} (B).
\end{equation}

The original application of the Pontrjagin-Thom construction was to
give a bordism theoretic description of the homotopy groups of Thom
spectra (or vice versa). Here is the most general version of this
correspondence.

\begin{thm}\label{pontthomtheorem}
Let $V \to X$ be a stable vector bundle of rank $-n \in \bZ$. If $-n
> 0$, then $\pi_0 (\bTh (V))=0$. If $n \geq 0$, then the group
$\pi_0 (\bTh (V))$ is isomorphic to the bordism group of triples
$(M^n,g,\phi)$, where $M^n$ is a closed smooth manifold, $g: M \to
X$ a continuous map and $\phi: \nu (M) \cong g^* V$ a stable vector
bundle isomorphism. Two triples $(M_0,g_0,\phi_0)$ and
$(M_1,g_1,\phi_1)$ are bordant if there exists a bordism $N$ from
$M_0$ to $M_1$, a continuous map $h:N \to X$ such that $h|_{M_i} =
g_i$ and a stable bundle isomorphism $\psi:\nu (N) \oplus \bR\cong
h^* V$ whose restriction to $M_i$ is the isomorphism $\nu (N) \oplus
\bR \cong \nu (M_i) \stackrel{\phi_i}{\cong} g_{i}^{*} V$.

Given a triple $(M,g,\phi)$, $c:M \to *$ the constant map, then the
corresponding element in $\pi_0 (\bTh (V))$ is the composition

\[
\suspinf \bS^0 \stackrel{\PT_c}{\to} \bTh (\nu (M)) \stackrel{g,
\Phi}{\to} \bTh (V).
\]
\end{thm}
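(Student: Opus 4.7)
The plan is to carry out the standard Pontrjagin-Thom argument, adapted to this general stable-bundle setting. Recall that $\pi_0(\bTh(V)) = \colim_k \pi_k(X_k^{V_k})$, where $X_k \subset X$ is the filtration level on which $V$ is classified by an ordinary bundle $V_k$ of rank $d+k$, with $d := -n$.

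The vanishing claim when $d > 0$ is pure connectivity. Since $V_k$ has positive rank $d+k$, the Thom isomorphism together with the Hurewicz theorem show that $X_k^{V_k}$ is $(d+k-1)$-connected, and since $d>0$ we get $\pi_k(X_k^{V_k}) = 0$ for every $k \geq 0$, so the colimit vanishes.

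For the isomorphism when $n \geq 0$, I would construct mutually inverse maps. The forward map is the one already described in the statement: given $(M,g,\phi)$, pick a fat embedding $M \hookrightarrow \bR^{n+k} \subset \bS^{n+k}$ for $k$ large, and compose the classical collapse $\bS^{n+k} \to \Th(\nu(M \subset \bR^{n+k}))$ with the map $\Th(\nu(M \subset \bR^{n+k})) \to X_k^{V_k}$ obtained from $g$ together with a rigidification of $\phi$ to an isomorphism of honest rank $d+k$ bundles (which is possible for $k$ large because $BO(d+k) \to BO$ is highly connected). Independence of $k$ and of the choice of embedding follows from the contractibility of the space of fat embeddings and from the fact that the structure maps of $\bTh(V)$ are compatible with stabilization of embeddings. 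For the inverse, given $f: \bS^{n+k} \to X_k^{V_k}$, I would smooth $f$ away from the basepoint, make it transverse to the zero section $X_k \hookrightarrow X_k^{V_k}$, and set $M := f^{-1}(X_k)$. This is a closed $n$-manifold, and $f$ induces $g := f|_M$ together with an isomorphism $\nu(M \subset \bS^{n+k}) \cong g^* V_k$ (via the derivative of $f$ in normal directions), which stably yields $\nu(M) \cong g^* V$. Applying the same transversality procedure to a homotopy $\bS^{n+k} \times I \to X_k^{V_k}$ produces precisely a bordism of the prescribed type; the extra $\bR$-summand in the bordism condition corresponds to the tangent direction of $I$.

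The main obstacle is bookkeeping in the passage between stable and unstable bundle data. One must show that stable isomorphisms $\phi: \nu(M) \cong g^* V$ can always be rigidified, for $k$ large enough, to honest isomorphisms of rank $d+k$ bundles, and that two such rigidifications give the same class in $\pi_0(\bTh(V))$; this is a standard application of the connectivity of $BO(r) \to BO$. One must also verify that the two constructions are mutually inverse — the composition corresponds to identifying a tubular neighborhood of a transverse preimage with the pullback of its normal bundle — and that the group structures on both sides (loop concatenation, implemented by the pinch map on $\bS^{n+k}$, versus disjoint union) agree. Each of these is a routine but careful extension of the classical Pontrjagin-Thom theorem from $BO$ or $MO$ to arbitrary base spaces.
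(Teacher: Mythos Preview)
The paper does not actually prove this theorem: it simply records it as well-known and refers the reader to Rudyak, \emph{On Thom spectra, orientability and cobordism}, Ch.~IV \S 7, for a detailed proof. Your sketch is the standard Pontrjagin--Thom argument carried out in the generality of an arbitrary stable vector bundle, and it is correct; it is essentially the argument one finds in that reference. So there is nothing to compare beyond noting that you have supplied what the paper chose to outsource.
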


A detailed proof of this well-known result can be found in
\cite{Rudyak}, ch. IV \S 7. Of course, this also gives an
interpretation of the groups $\pi_k (\bTh (V)) \cong \pi_{0} (\bTh
(V - \bR^k))$.

\subsection{Madsen-Tillmann-Weiss spectra and Madsen-Weiss maps}

Let $n \geq 0$, let $BSO(n)$ be the classifying space for oriented
Riemannian $n$-dimensional vector bundles and let $L_n \to BSO(n)$
be the universal oriented vector bundle. The reader should note that
the space $BSO(0)$ is homotopy equivalent to the two-point space
$\bS^0$ and therefore it is \emph{not} the classifying space for the
group $SO(0)$. The most natural explanation for this phenomenon
occurs in the framework of stacks. Let $\Or (\bR^n)$ be the set of
orientations of the vector space $\bR^n$; the group $O(n)$ acts on
$\Or (\bR^n)$. The stack of oriented $n$-dimensional vector bundles
is the quotient stack $\Or (\bR^n) \hq O(n)$. For $n \geq 1$, the
$O(n)$-action on $\Or (\bR^n)$ is transitive and hence $\Or (\bR^n)
\hq O(n) \cong \ast \hq SO(n)$, while for $n=0$, we have $\Or (\bR^0
)\hq O(0) \cong \bS^0$.

\begin{defn} The Thom spectrum of the stable
vector bundle $- L_n$ on $BS
O(n)$ is called the
\emph{Madsen-Tillmann-Weiss spectrum} (or MTW-spectrum) and it is
denoted by $\MTSO(n)$. Moreover, we denote by $\MNSO (n)$ the Thom
spectrum of $L_n$.
\end{defn}

Let $f:E \to B$ be a smooth oriented $M$-bundle. Recall that the
space of orientation-preserving bundle maps $\lambda:T_v E \to L_n$
is contractible. Therefore the orientation defines a contractible
space of maps $\kappa_E=\bTh (\lambda):\bTh (-T_v E) \to \MTSO(n)$.
The \emph{Madsen-Tillmann-Weiss map} (or MTW-map) of the bundle $f:E
\to B$ is the composition

\begin{equation}\label{defmtmap}
\alpha_E:=\kappa_E \circ \PT_f: \suspinf B_+ \to \MTSO(n),
\end{equation}

which is defined uniquely up to a contractible space of choices.

For the universal oriented $M$-bundle $E_M \to B \Diff^+ (M)$, we
obtain a universal MTW-map

\[
\alpha_{E_M}: \suspinf (B \Diff^+ (M))_+ \to \MTSO(n).
\]

On the other extreme, the constant map $M \to \ast$ is a smooth
oriented $M$-bundle and its MTW-map is a map $\alpha_M: \suspinf
\bS^0 \to \MTSO (n)$.

By the Thom isomorphism,

\[
H^*(BSO(n); R)=H^*(\suspinf BSO(n)_+;R) \cong H^{*-n} (\MTSO(n);R).
\]

The cohomology of $BSO(n)$ is well-known. For example, if $\bF$ is a
field of characteristic different from $2$, then

\begin{equation}\label{cohobso}
H^*(BSO(2m+1); \bF) \cong \bF[p_1, p_2, \ldots p_m]; \; \;H^*
(BSO(2m);\bF) \cong \bF[p_1, \ldots p_m, \chi]/ (\chi^2 -p_m).
\end{equation}

Let $f: E \to B$ be an oriented $n$-dimensional manifold bundle, let
$\alpha_E: \suspinf B_+ \to  \MTSO(n)$ be its MTW-map. An element $c
\in H^* (BSO(n))$ can be interpreted as a characteristic class for
oriented $n$-dimensional vector bundles and therefore we write
$c(T_v E) \in  H^* (E)$ for the pullback $\bar{\lambda}^* c$, where
$\bar{\lambda}: E \to BSO(n)$ is any map underlying a bundle map
$T_v E \to L_n$.

\begin{prop}\label{beckergottlieb}
Let the notations be as above. Then

\[\alpha^{*}_{E}  \thom_{- L_n} (c)=f_{!} (c (T_v (E))) \in H^{*-n} (B).\]
\end{prop}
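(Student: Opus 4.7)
The plan is to unravel both sides to the naturality of the Thom isomorphism under the stable bundle map classifying the vertical tangent bundle. Write $\alpha_E = \kappa_E \circ \PT_f$ from \eqref{defmtmap}; then
\[
\alpha_E^{*}\, \thom_{-L_n}(c) \;=\; \PT_f^{*}\, \kappa_E^{*}\, \thom_{-L_n}(c).
\]
Comparing this with the definition \eqref{defumkehr} of the umkehr map, which gives $f_!(c(T_v E)) = \PT_f^{*}\, \thom_{-T_v E}(c(T_v E))$, it suffices to establish the intermediate identity
\[
\kappa_E^{*}\, \thom_{-L_n}(c) \;=\; \thom_{-T_v E}\bigl(c(T_v E)\bigr)
\]
in $H^{*}(\bTh(-T_v E))$.

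To prove this, I will use the naturality of Thom classes and of cup product. The map $\kappa_E$ is the Thom-spectrification of a stable bundle map $-T_v E \to -L_n$ covering the classifying map $\bar\lambda: E \to BSO(n)$ for the oriented vertical tangent bundle. Since the canonical $H\bZ$-Thom classes on $-L_n$ and on $-T_v E$ are both characterized by the fibrewise condition in subsection \ref{orientation} (restriction to a fibre is the standard generator), and since $\kappa_E$ restricts to a stable isomorphism on each fibrewise Thom space, we get $\kappa_E^{*}(\thom_{-L_n}) = \thom_{-T_v E}$. Combining this with the naturality of the diagonal $\bTh(V) \to \suspinf X_+ \wedge \bTh(V)$ under the pair $(\bar\lambda, \kappa_E)$, the module-multiplication description of the Thom isomorphism from subsection \ref{orientation} yields
\[
\kappa_E^{*}(c \cdot \thom_{-L_n}) \;=\; \bar\lambda^{*}(c) \cdot \kappa_E^{*}(\thom_{-L_n}) \;=\; c(T_v E) \cdot \thom_{-T_v E},
\]
which is exactly $\thom_{-T_v E}(c(T_v E))$. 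Substituting back completes the proof.

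There is no serious obstacle; the argument is a bookkeeping exercise. The one place that requires a little care is the identification of the Thom-spectrum level map $\kappa_E$ with the Thom-spectrification of the stable bundle map $-T_v E \to -L_n$, so that the pullback of the canonical Thom class is genuinely the canonical Thom class of the source. This follows from the colimit construction of $\bTh$ recalled in subsection \ref{thomspectra} together with the contractibility of the space of orientation-preserving bundle maps $T_v E \to L_n$ used to define $\kappa_E$ in the first place.
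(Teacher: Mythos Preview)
Your proof is correct and follows essentially the same approach as the paper's own proof. The paper condenses the key step $\kappa_E^{*}\,\thom_{-L_n}(c)=\thom_{-T_vE}(c(T_vE))$ into a single sentence (``compatibility of Thom isomorphisms and pullbacks''), whereas you spell out explicitly why the Thom class pulls back to the Thom class and why the module multiplication is natural under $(\bar\lambda,\kappa_E)$; this is a welcome elaboration but not a different argument.
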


\begin{proof}
By definition

\[
\alpha^*  \thom_{-  L_n} (c) \stackrel{\ref{defmtmap}}{=} \PT^{*}_{f} \bTh (\lambda)^{*} \thom_{- L_n} (c) = \\
\PT_{f}^{*} \thom_{-T_v E} ( c (T_v E)).\]

The second equality expresses the compatibility of Thom isomorphisms
and pullbacks.
\end{proof}

Therefore any $c \in H^* (BSO(n))$ defines a characteristic class of
oriented $n$-manifold bundles. We call these classes ''generalized
MMM-classes'', because the case $n=2$, $c= \chi^{i+1}$ gives the
classes $\kappa_i$ defined by \emph{M}umford \cite{Mum},
\emph{M}iller \cite{Miller} and \emph{M}orita \cite{Mor}.

Recall the adjunction between the two functors $\Sigma^{\infty}$ and
$\loopinf$: given a spectrum $\gE$ and a space $X$, there is a
natural bijection

\[
[X, \loopinf \gE] \cong [\suspinf X_+, \gE].
\]

Under this adjunction, $\alpha_E$ corresponds to a map $B \to
\loopinf \MTSO (n)$, which is the original MTW-map studied in
\cite{MT}, \cite{MW}, \cite{GMTW}. We will call this adjoint by the
same name and denote it by the same symbol. There is no danger of
confusion, because we keep our notation for spaces and spectra
entirely disjoint. For the more computational purposes of the
present paper, the spectra point of view is more transparent and
convenient.

The adjoint $\suspinf (\loopinf \gE)_+ \to \gE$ of the identity on
$\loopinf \gE$ induces a map

\[
s: A^* (\gE) \to A^* (\loopinf \gE),
\]

the \emph{cohomology suspension}, whenever $A$ is a spectrum. If $A
= H \bQ$, then the right-hand-side is a graded-commutative
$\bQ$-algebra, but the left-hand-side is only a graded $\bQ$-vector
space. Let $\Lambda$ denote the functor which associates to a graded
module the free, graded-commutative algebra it generates; $s$
extends to an algebra homomorphism

\begin{equation}\label{milmooreiso}
s: \Lambda (H^{*>0} (\gE; \bQ)) \to H^* (\loopinf_0 \gE; \bQ).
\end{equation}

This is an isomorphism by a classical result of algebraic topology,
see \cite{MilMoo}, p. 262 f.

If $X$ is a space and $\suspinf X_+ \to \gE$ a map with adjoint $X
\to \loopinf \gE$, then the following diagram commutes

\begin{equation}\label{cohosusp}
\xymatrix{
A^* (\gE) \ar[d]^{s}\ar[r] & A^{*} (\suspinf X_+) \ar@{=}[d]\\
A^* (\loopinf \gE) \ar[r] & A^* (X). }
\end{equation}

The rational cohomology of $\loopinf \MTSO(n)$ can be easily
computed using \ref{cohobso}, \ref{cohosusp} and \ref{milmooreiso}.

\subsection{The component group of Madsen-Tillmann-Weiss
spectra}\label{homotopyproperties}

There are several maps which relate the spectra $\MTSO(n)$ for
different values of $n$.

\begin{itemize}
\item The obvious bundle isomorphism $L_{n+1} |_{BSO(n)} \cong L_{n}
\oplus \bR$ induces a map of spectra $\eta: \MTSO (n) \to \Sigma
\MTSO(n+1)$.
\item The inclusion $-L_{n+1} \to \underline{0}$ of stable
vector bundles on $BSO(n+1)$ yields a spectrum map $\omega: \MTSO
(n+1) \to \suspinf BSO(n+1)_+$.
\item The MTW-map of the oriented $\bS^{n}$-bundle $BSO(n) \to BSO(n+1)$
is a map $\beta: \suspinf BSO(n+1)_+ \to \MTSO(n)$.
\end{itemize}

\begin{prop}\label{cofibseq}
The maps $\eta$, $\omega$ and $\beta$ form a cofibration sequence

\begin{equation}\label{fibersequence}
\MTSO (n+1) \stackrel{\omega}{\to} \suspinf BSO(n+1)_+
\stackrel{\beta}{\to} \MTSO(n) \stackrel{\eta}{\to} \Sigma \MTSO
(n+1).
\end{equation}
\end{prop}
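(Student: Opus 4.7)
The plan is to realise the three maps as the terms of a single cofibration sequence obtained by Thomifying the sphere--disk pair of the universal bundle $L_{n+1}$ over $BSO(n+1)$.

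The geometric input is that the identification $SO(n+1)/SO(n) = S^{n}$ exhibits $BSO(n)$ as the unit sphere bundle $S(L_{n+1})$, while $D(L_{n+1})$ is homotopy equivalent to $BSO(n+1)$ via the zero section. Under the identification $S(L_{n+1}) \simeq BSO(n)$, the sphere-bundle projection $\pi$ coincides with the standard inclusion $BSO(n) \hookrightarrow BSO(n+1)$, the pullback $\pi^{*} L_{n+1}$ is canonically isomorphic to $L_{n} \oplus \bR$ (with $\bR$ the radial trivial line), and the vertical tangent bundle of $\pi$ is canonically $L_{n}$.

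For any vector bundle $V \to X$ and any virtual bundle $W$ on $X$, the standard sphere--disk cofibration, Thomified, yields a cofibration of spectra
\[
\bTh\bigl(\pi^{*} W\bigr) \longrightarrow \bTh(W) \longrightarrow \bTh(W \oplus V);
\]
this is the parametrised Thom construction applied to the cofibration $S(V)_{+} \to X_{+} \to \Th(V)$ and reduces to that cofibration smashed with $\bTh(W)$ when $V$ is trivial. Apply this identity with $X = BSO(n+1)$, $V = L_{n+1}$, $W = -L_{n+1}$. Then $\pi^{*}(-L_{n+1}) = -L_{n} - \bR$ and $W \oplus V = \underline{0}$, so the cofibration becomes
\[
\Sigma^{-1}\MTSO(n) \longrightarrow \MTSO(n+1) \longrightarrow \Sigma^{\infty} BSO(n+1)_{+},
\]
whose Puppe extension is the sequence asserted in the proposition. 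The middle arrow is by construction induced by the inclusion of virtual bundles $-L_{n+1} \hookrightarrow \underline{0}$ and so equals $\omega$; the connecting map $\MTSO(n) \to \Sigma\MTSO(n+1)$ is induced by the inclusion $i: BSO(n) \hookrightarrow BSO(n+1)$ together with the canonical isomorphism $i^{*}L_{n+1} \cong L_{n} \oplus \bR$, which is exactly $\eta$.

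The main obstacle is to identify the remaining arrow $\Sigma^{\infty} BSO(n+1)_{+} \to \MTSO(n)$ with the Madsen--Tillmann--Weiss map $\beta = \kappa_{\pi} \circ \PT_{\pi}$ of the smooth $S^{n}$-bundle $\pi$. This amounts to a compatibility between the connecting map in the Thomified $(D,S)$-cofibration and the Pontrjagin--Thom collapse of $\pi$: concretely, an embedding $L_{n+1} \hookrightarrow \bR^{N}$ produces a fibrewise embedding $D(L_{n+1}) \hookrightarrow \bR^{N} \times BSO(n+1)$ in which a tubular neighbourhood of $S(L_{n+1})$ realises the normal bundle $\nu(\pi)$, and the resulting collapse map is simultaneously $\PT_{\pi}$ and the connecting map of the Thomified cofibration. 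Once $\kappa_{\pi}$ is recognised as the canonical identification $T_{v}\pi \cong L_{n}$, one recovers precisely $\beta$. This last step is routine, but is where essentially all the bookkeeping in the proof resides.
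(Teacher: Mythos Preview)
Your argument is correct and is precisely the standard proof of this cofibration sequence: Thomify the sphere--disk pair of $L_{n+1}$ twisted by $-L_{n+1}$ and identify the three maps. The paper does not give an independent argument but simply cites Lemma~2.1 of \cite{Galspin}, whose proof is exactly the construction you have written out; so your approach coincides with the one underlying the paper's reference.
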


\begin{proof}
This follows immediately from Lemma 2.1 in \cite{Galspin}.
\end{proof}

The (homotopy) colimit of the sequence

\[\MTSO (0) \stackrel{\eta}{\to} \Sigma \MTSO(1) \stackrel{\eta}{\to}
\Sigma^2 \MTSO(2) \to \ldots\]

is the universal Thom spectrum $\widetilde{\MSO}$, the Thom spectrum
of the universal $0$-dimensional stable vector bundle $-L \to \BSO$
(which becomes $\bR^n -L_n$ when restricted to $\BSO(n)$). The usual
universal Thom spectrum $\MSO$ is the Thom spectrum of $ L \to
\BSO$. The spectra $\widetilde{\MSO}$ and $\MSO$ are homotopy
equivalent: Let $ \iota: \BSO \to \BSO$ be the inversion map, such
that $\iota^* L = -L$. The map $\iota$ is covered by a bundle map
$j:-L \to L$ which induces a homotopy equivalence $\bTh (j):
\widetilde{\MSO} \to \MSO$

The long exact homotopy sequence induced by \ref{fibersequence}
shows that the map $\eta_*: \pi_i (\MTSO(n)) \to \pi_i( \Sigma
\MTSO(n+1))$ is an epimorphism if $i \leq 0$ and an isomorphism if
$i < 0$. Therefore the inclusion $\Sigma^n \MTSO(n) \to
\widetilde{\MSO}$ yields an isomorphism $\pi_{i} (\MTSO(n)) \cong
\pi_{n+i} (\widetilde{\MSO}) \cong \pi_{n+i} (\MSO) \cong
\Omega^{SO}_{n+i}$ (the oriented bordism group) for all $ i <0$.
Therefore from \ref{cofibseq}, we get a commutative diagram; the rows are exact and the vertical maps are isomorphisms:

\begin{equation}\label{folge}
\xymatrix{
\pi_0 (\MTSO(n+1)) \ar[r] \ar[d] & \pi_0 (\suspinf \BSO(n+1)_+) \ar[r] \ar[d] & \pi_0
(\MTSO(n)) \ar[r] \ar[d] &   \pi_{-1} (\MTSO(n+1)) \ar[d] \ar[r] & 0\\
\pi_0 (\MTSO(n+1)) \ar[r]  & \bZ \ar[r] & \pi_0
(\MTSO(n)) \ar[r] &  \Omega_{n}^{SO}  \ar[r] & 0.
}
\end{equation}

In order to study these groups further, we  use the
bordism-theoretic interpretation of $\pi_0 (\MTSO(n))$ provided by
Theorem \ref{pontthomtheorem}.

It can be rephrased in such a way that $\pi_0(\MTSO(n))$ is the
bordism group of oriented $n$-manifolds, where $M_0$ and $M_1$ are
considered to be bordant if and only if there exists an oriented
bordism $N$ between them, an oriented $n$-dimensional vector bundle
$V$ on $N$ and a stable bundle isomorphism $TN \cong V \oplus \bR$.
Clearly we can assume that $N$ has no closed component and
therefore, there is an actual isomorphism of vector bundles $TN
\cong V \oplus \bR$ by elementary obstruction theory. In other
words, there is a nowhere vanishing tangential vector field on $N$
which is the inward normal vector field on $M_0$ and the outward
normal vector field on $M_1$. This bordism group is also known as
Reinhardt's bordism group, see \cite{Rein}.

The maps in \ref{folge} have the following interpretation:

\begin{enumerate}
\item $\pi_0(\MTSO(n+1)) \to \bZ$ sends the bordism class of an
oriented $n+1$-manifold $M$ to its Euler number $\chi(M)$ (which is
a cobordism invariant in Reinhardt's bordism).
\item $\bZ \to \pi_0 (\MTSO (n))$ sends $1$ to the bordism class of $\bS^n$.
\item $\pi_0(\MTSO (n)) \to \Omega_{n}^{SO}$ is the forgetful map.
\end{enumerate}

Only the first claim needs a further justification. If $f:E \to B$ is an oriented
$n+1$-manifold bundle, then the composition $\suspinf B_+
\stackrel{\alpha_E}{\to} \MTSO(n+1) \stackrel{\omega}{\to} \suspinf
\BSO(n+1)_+$ is the composition of the Becker-Gottlieb transfer
$\suspinf B_+ \to \suspinf E_+$ (see \cite{BG}) with the classifying
map $\suspinf E_+ \to \suspinf \BSO(n)_+$ of $T_v E$. Therefore, if
$B$ is connected (and $n+1>0$), then the induced map on $\bZ = \pi_0
\suspinf B_+ \to \suspinf BSO(n+1)_+ = \bZ$ is the multiplication by
the Euler number $\chi(M)$ of $M$, by Theorem 2.4 of \cite{BG}.

Let $\eul_n \subset \bZ$ be the subgroup generated by all Euler
numbers of oriented $n$-manifolds; the exact sequence \ref{folge}
induces

\begin{equation}\label{pinullsequence}
0 \to \bZ / \eul_{n+1} \to \pi_0 (\MTSO(n)) \to \Omega_{n}^{SO} \to
0.
\end{equation}

The group $\eul_n \subset \bZ$ is easily computed. Its values are

\[
\eul_n =
\begin{cases}
0;   &  n  \not \equiv 0 \pmod 2;\\
2 \bZ ;& n \equiv 2 \pmod 4;\\
 \bZ ; & n \equiv 0 \pmod 4.
\end{cases}
\]

The first case is clear. The third case follows from $\chi
(\bS^{4k}) =2$ and $\chi (\bC \bP^{2k}) = 2k+1$. The second case is
implied by $\chi(\bS^{4k+2})=2$ and the congruence $\chi(M^{4k +2})
\equiv 0 \pmod 2$ which follows from Poincar\'e duality in a
straightforward manner.

The sequence \ref{pinullsequence} is always split, as we show now.
If $M$ is an oriented closed $(4m+1)$-dimensional manifold, then the
\emph{Kervaire semi-characteristic} $\kerv (M) \in \bZ/2$ is defined
to be $\kerv (M) := \sum_{i \geq 0} b_{2i} (M) = \sum_{i=0}^{2m}
\dim b_{i}(M)$, where $b_i$ is the real Betti number of $M$. By the
proposition below, $\kerv (M)$ defines a homomorphism $\pi_0
(\MTSO(4m+1)) \to \bZ/2$.

\begin{prop}
The Kervaire semi-characteristic of an oriented $4m+1$-manifold $M$
only depends on its bordism class in $\pi_0 (\MTSO(4m+1))$.
\end{prop}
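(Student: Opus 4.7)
The plan is to reduce the claim to the topological lemma
\[
\kerv(\partial N) \equiv \chi(N) \pmod 2
\]
for any compact oriented $(4m+2)$-dimensional $N$ with boundary (with $\kerv$ extended additively over disjoint unions). Once this lemma is in hand, the proposition follows quickly. By Theorem \ref{pontthomtheorem} in its Reinhardt-bordism reformulation, two oriented $(4m+1)$-manifolds $M_0, M_1$ representing the same class in $\pi_0(\MTSO(4m+1))$ arise as the two boundary components of some oriented $N^{4m+2}$ equipped with a nowhere-vanishing vector field $X$ pointing inward on $M_0$ and outward on $M_1$. The Poincar\'e-Hopf theorem for manifolds with boundary, applied to $X$, computes $\chi(N) = \chi(M_0) = 0$, since $M_0$ is odd-dimensional. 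Combined with the lemma and the additivity of $\kerv$, this yields $\kerv(M_0) + \kerv(M_1) \equiv \chi(N) \equiv 0 \pmod 2$.

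To prove the lemma I would work in real cohomology, set $n_i := \dim H^i(N)$, $r_i := \dim H^i(N, \partial N)$, $b_i := \dim H^i(\partial N)$, and chase the long exact sequence of the pair $(N, \partial N)$, cancelling each intermediate image-dimension that occurs twice. By definition $\kerv(\partial N) \equiv \sum_{i=0}^{2m} b_i \pmod 2$, so this truncated sum is what one needs to compute. Poincar\'e-Lefschetz duality gives $r_i = n_{4m+2-i}$; after substituting and cancelling, the sum collapses modulo $2$ to $\chi(N) + d$, where
\[
d := \dim \mathrm{Im}\bigl(\alpha\colon H^{2m+1}(N, \partial N) \to H^{2m+1}(N)\bigr)
\]
is the only error term surviving from the middle dimension.

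The single non-routine step, which I expect to be the only real obstacle, is to show that $d$ is even. Under Poincar\'e-Lefschetz duality the map $\alpha$ is identified with the cup-product bilinear form $(a,b) \mapsto \int_N a \cup b$ on $H^{2m+1}(N, \partial N)$; because the degree $(2m+1)$ is odd, this form is antisymmetric, and an antisymmetric bilinear form on a finite-dimensional real vector space has even rank. Everything else reduces to routine long exact sequence bookkeeping together with the boundary version of Poincar\'e-Hopf.
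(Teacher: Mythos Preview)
Your proposal is correct and follows essentially the same route as the paper: both arguments use the long exact sequence of $(N,\partial N)$ together with Poincar\'e--Lefschetz duality to obtain $\kerv(\partial N)\equiv \chi(N)+d \pmod 2$, and then argue that $d=\dim\operatorname{Im}\bigl(H^{2m+1}(N,\partial N)\to H^{2m+1}(N)\bigr)$ is even because the associated cup-product form is skew-symmetric. The only organizational differences are that you isolate the lemma $\kerv(\partial N)\equiv\chi(N)\pmod 2$ before specializing, and that you obtain $\chi(N)=0$ via Poincar\'e--Hopf with mixed boundary conditions, whereas the paper uses the doubling trick (the nowhere-vanishing field extends over $dN$, so $\chi(dN)=2\chi(N)=0$); these are equivalent and equally short.
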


\begin{proof}
It is enough to show the following: If $N^{4m+2}$ is a connected
oriented manifold with boundary $M$ and if there is a nowhere
vanishing vector field on $N$ which is normal to the boundary, then
$\kerv (M)=0$. Clearly, the double $d N$ of $N$ is closed and has a
vector field without zeroes; thus $\chi(d N)=0$ and therefore
$\chi(N)=0$.
Let $A$ be the image of $H^{2m+1}(M,N) \to H^{2m+1}(N)$.

Look at the long exact sequence of the pair $(N,M)$ in real
cohomology:

\[
 0 \to H^0 (N,M)  \to H^0 (N) \to H^0 (M) \to
 \ldots \to H^{2m} (M) \to H^{2m+1} (N,M) \to A \to 0.
 \]

We compute (in $\bZ/2$)

\begin{eqnarray*}
0 = \sum_{i=0 }^{2m+1}b_i (N;M) + \sum_{i= 0}^{2m}b_i (N) + \sum_{i= 0}^{2m}b_i (M) + \dim A =& \\
= \sum_{i=2m+1}^{4m+2} b_i (N)   + \sum_{i= 0}^{2m}b_i (N) + \sum_{i= 0}^{2m}b_i (M) + \dim A  =& \\
= \chi(N) + \kerv (M) + \dim A = \kerv (M) + \dim A.
\end{eqnarray*}

By Poincar\'e duality, the cup product pairing on $A$ is skew-symmetric and nondegenerate, thus $\dim A \equiv 0 \pmod 2$.
\end{proof}

Let us summarize the description of the component group of
$\MTSO(n)$.

\begin{enumerate}
\item If $n \equiv 3 \pmod 4$, then $\pi_0 (\MTSO(n)) \cong
\Omega_{n}^{SO}$
\item If $n \equiv 2 \pmod 4$, then the sequence \ref{pinullsequence}
splits by $\pi_0 (\MTSO(n)) \to \bZ$; $[M] \mapsto \frac{1}{2}
\chi(M)$.
\item If $n \equiv 0 \pmod 4$, then \ref{pinullsequence}
is also split. If $M^{4m}$ is an oriented manifold, then $\sign (M)
+ \chi(M) \equiv 0 \pmod 2$ is immediate from the definition of the
signature and the Euler number and from Poincar\'e duality. The map
$\pi_0 (\MTSO(n)) \to \bZ$, $[M] \mapsto \frac{1}{2}(\sign (M) +
\chi(M))$ is a splitting.
\item If $n \equiv 1 \pmod 4$, then \ref{pinullsequence} is split by the
Kervaire semi-characteristic $\pi_0 (\MTSO(n)) \to \bZ/2$.
\end{enumerate}

\section{Background material on index theory}\label{backgrounindex}

In this section, we will present background material on index theory
for bundles of compact manifolds. For details, the reader is
referred to either the original source \cite{ASIV} or to the
textbook \cite{LM}.

There are two types of the $K$-theoretic index theorem: One for
usual elliptic operators and another one for self-adjoint elliptic
operators on a fibre bundle $f: E \to B$. In the former case, the
index is an element in $K^0(B)$ while in the latter one we get an
index in\footnote{We are using the Bott periodicity theorem without
mentioning it. Therefore we identify $K^1$ with $K^{-1}$.} $K^1
(B)$.

Assume that $f:E \to B$ is a smooth fibre bundle on a paracompact
space $B$ with compact closed fibres. Assume that a fibrewise smooth
Riemannian metric on the vertical tangent bundle $T_v E$ is chosen.
All vector bundles on $E$ will be fibrewise smooth (i.e. the
transition functions are smooth in the fibre-direction) and all
hermitian metrics on vector bundles are understood to be smooth. All
differential operations, like exterior derivatives and connections,
will be fibrewise.

For an hermitian vector bundle $V \to E$, we denote $\Gamma_B (V) =
\bigcup_{x \in B} \Gamma (E_x ; V_x) $, where $E_x = f^{-1}(x)$ and
$V_x = V|_{E_x}$. This family of vector spaces over $B$ can be made
into a vector bundle (of Fr\'echet spaces) by requiring that a
section $s:B \to \Gamma_B (V)$ is continuous if the associated
section of $V \to E$ is continuous in the $C^{\infty}$-topology.
Using the metrics on $T_v E$ and $V$ and a connection on $V$, we can
define the $L^2$-Sobolev norms $\| \ldots \|_{r}$ on $\Gamma_B (V)$,
for all $r \geq 0$. The completion with respect to this norm is a
Hilbert bundle which we denote by $W^{2,r}_{B}(V)$.

There is a technical problem to overcome at this point; it is
discussed and solved in \cite{Atiseg}, pp. 5, 13 f., 38-43. Namely,
it is not quite true that the structural group of $W^{2,r}_{B} (V)$
is general linear group of an infinite-dimensional Hilbert space.
The reason is that the action $\Diff (M) \actson W^{2,s} (M)$ is
continuous only in the sense that $\Diff(M) \times W^{2,s} (M) \to
W^{2,s} (M)$ is continuous, but not $\Diff(M) \to \Gl( W^{2,s})$
when the latter has the norm topology. Instead, this map is
continuous when $\Gl ( W^{2,s})$ has the compactly generated
compact-open topology. Denote by $\Gl ( W^{2,s})_{co}$ the group
with this topology. Then $\Gl ( W^{2,s})_{co}$ is contractible (this
is much easier than Kuiper's theorem which asserts that $\Gl (
W^{2,s})$ is contractible). Moreover, $\Gl ( W^{2,s})_{co}$ acts
continuously by conjugation on the space of Fredholm operators with
a suitably redefined topology. This new space of Fredholm operators
is homotopy equivalent to the original one.

Therefore the Hilbert bundles $W^{2,r}_{B}(V)$ are trivial and the
trivialization is unique up to homotopy (in fact, the space of
trivializations is contractible).

Let $V_0, V_1 \to E$ be two hermitian vector bundles and let $D: V_0
\to V_1$ be a vertical elliptic operator of order $m$. Then $D$ has
an extension to the bundle of Sobolev spaces $D: W^{2,s+m}_{B} (V)
\to W^{2,s}_{B}(V)$, which consist of Fredholm operators. We choose,
for any vector bundle $V$, an elliptic pseudodifferential operator
$A$ of order $-m/2$ which is invertible (for example
$A_V=(1+\nabla^* \nabla)^{-m/4}$ will do for any connection $\nabla$
on $V$). The operator $A_{V_1} D A_{V_0}$ has order $0$ and so it
induces a family of Fredholm operator $W^{2,0}_{B}(V_0) \to
W^{2,0}_{B}(V_1)$. After an application of the trivializations
above, we get a continuous map, denoted $\ind (D)$:

\[
\ind (D) : B \to \Fred (H),
\]

where $H$ is a fixed separable, infinite-dimensional Hilbert space.
The Atiyah-J\"anich theorem states that the $\Fred (H)$ is a
classifying space for complex $K$-theory and therefore we get an
element $\ind (D) \in K^0 (B)$. It does not depend on the choices
involves.

On the other hand, if $D:\Gamma_B (V) \to \Gamma_B (V)$  is a
formally self-adjoint elliptic operator of order $m  \geq 0$, we get
an index in $K^1 (B)$. Here we consider the operator $A_{V} D
A_{V}^{*}$, which is elliptic of order $0$ and formally
self-adjoint. It has the same kernel and the same positive and
negative spectral spaces as the original $D$.

Thus we get a self-adjoint bounded Fredholm operator $D:
W^{2,0}_{B}(V) \to W^{2,0}_{B}(V)$. In the same way as for ordinary
elliptic operators, we get a map $B \to \Fred_{s.a.}(H)$, where
$\Fred_{s.a.}^{}(H) $ is the space of self-adjoint Fredholm
operators on $H$ endowed with the norm topology. Let
$\Fred^{\pm}_{s.a.}(H) \subset \Fred_{s.a.}(H)$ be the subspace
consisting operators $A$ such that $\pm A$ is essentially positive
(an operator is \emph{essentially positive} if there exists an
$A$-invariant subspace $U \subset H$ of finite codimension, such
that $A|_{U}$ is positive definite). The two spaces
$\Fred_{s.a}^{\pm}(H) \subset \Fred_{s.a.}(H)$ are open, closed and
contractible (in fact, $\Fred_{s.a.}^{\pm}(H)$ is star-shaped with
center $\pm \id$).

Let $\Fred_{s.a.}^{0}= \Fred_{s.a.}(H) \setminus
(\Fred_{s.a.}^{+}(H) \cup \Fred_{s.a.}^{-}(H))$. Atiyah and Singer
\cite{AS69} showed that it has a very interesting topology: it has
the homotopy type of the infinite unitary group $U(\infty)$. Thus it
is a representing space for $K^{-1}$.

Returning to the self-adjoint family of operators $D$ on $E \to B$,
the map $B \to \Fred_{s.a.}(H)$ defines an element $\ind (D) \in K^1
(B)$ (if $D$ is essentially definite, this element is trivial).

\subsection{The topological index}

Let $f: E \to B$ be a smooth proper bundle, $\pi: T:=T^{*}_{v} E \to
E$ the vertical cotangent bundle and $\pi_0:\bS (T^{*}_{v} E) \to E
$ its unit sphere bundle. Let $D: \Gamma_{B} (V_0) \to \Gamma_{B}
(V_1)$ an elliptic differential operator. Recall that the
\emph{symbol} of $D$ is a bundle map $\symb_D:\pi^* V_0 \to \pi^*
V_1$ which is an isomorphism outside the zero section (this is the
definition of ellipticity). If $D$ has order $1$, then the symbol is
\[
\symb_D ( \xi) v = i( D (f s) - f Ds),
\]
where $\xi $ is a vertical cotangent vector at $x \in E$, $f$ is a
smooth function such that $df_x = \xi$ and $s$ is a section of $V_0$
such that $s(x)=v$. For higher orders, there is a more complicated
formula, which we will not need here.

We will constantly identify the vertical cotangent and the vertical
tangent bundle. The symbol $\symb_D$ defines the \emph{symbol class}
$[\symb_D]_0 \in K^0 (T; T \setminus 0) = K^0 (E^{T})$ of $D$.

Following \cite{APSIII}, we can associate a symbol class
$[\symb_D]_{1} \in K^{-1}(E^{T})$ to a self-adjoint elliptic
operator $D$. Consider the symbol $\symb_D: \pi^* V \to \pi^* V$. It
is a self-adjoint endomorphism of $\pi^* V$ and it is an isomorphism
away from the zero section. Let $\tilde{\pi}: T \oplus \bR \to E$.
We define $[\symb_D]_{1}$ to be the class in $K^{-1}(E^{T}) =
K^{0}((T,T \setminus 0) \times (\bR , \bR \setminus 0))$ represented
by the complex

\[
0 \to \tilde{\pi}^{*} V \stackrel{\tilde{\symb_D}}{\to}
\tilde{\pi}^{*} V \to 0,
\]

where $\tilde{\symb_D}$ is given at the point $(x,t) \in T \oplus
\bR$ by $\tilde{\symb_D}_{(x,t)} := (\symb_D)_x - i t \eins$.
Actually, \cite{APSIII} give a different formula, but the passage
between the two formulations is by an elementary deformation. We
leave it to the reader to figure that out.

Recall the relative Thom isomorphism \ref{relthomiso} $\thom_{-T_v E
\otimes \bC}: K^* (E^{T_v E}) \to K^{*} (E^{-T_v E})$. The
Atiyah-Singer family index theorem (\cite{ASIV} for the usual case,
\cite{APSIII} for the self-adjoint case) states that in both cases
($i=0,1$)

\begin{equation}\label{indexthm}
\ind (D) = \beta^{-d}\PT^{*}_{f} \thom_{-T_v E \otimes \bC}
([\symb_D]_i) \in K^{i}(B).
\end{equation}

\subsection{Universal operators}

Now we assume that the first order elliptic operator $D$ on the
$n$-dimensional oriented bundle $E \to B$ family is \emph{universal
on the symbolic level}. By that expression, we mean that there exist
$SO(n)$-representations $W_0$ and $W_1$ and an $SO(n-1)$-equivariant
isomorphism $\gamma:W_0 \to W_1$ such that

\begin{enumerate}
\item as Hermitian vector bundles, $V_0$ and $V_1$ are isomorphic to the
associated bundle $\Fr_{v} (E) \times_{SO(n) } W_i \to E$;
\item the symbol $\symb_D$ restricted to the unit cotangent sphere
bundle equals the bundle map $\Fr_{v} (E) \times_{SO(n-1)} W_0 \to
\Fr_{v} (E) \times_{SO(n-1)} W_1$ induced by $\gamma$.
\end{enumerate}

The trivial vector bundles $ \bR^n \times W_i$ on $\bR^n$ are
$SO(n)$-equivariant and the map $\gamma$ defines an
$SO(n)$-equivariant isomorphism $ \bS^{n-1} \times W_0 \to \bS^{n-1}
\times W_0$. Therefore, $(W_0,W_1, \gamma)$ defines a class
$\sigma_D \in K^{0}_{SO(n)} (\bD^n, \bS^{n-1})$. The image of
$\sigma_D$ under the standard homomorphism $K^{0}_{SO(n)}(\bD^n,
\bS^{n-1}) \to K^{0} (\bD (L_n), \bS(L_n)) \cong K^0 (\MNSO(n))$ is
denoted by the same symbol. Clearly, the class $\sigma_D$ pulls back
to the symbol class $[\symb_D]$ under the map $\bTh (T_v E) \to
\MNSO(n)$. By the Thom isomorphism $K^0 (\MNSO(n)) \cong K^0 (\MTSO
(n))$, we get a class $\thom \sigma_D \in K^0 (\MTSO(n))$.

The index theorem for the symbolically universal operator $D$ on the
fibre bundle $f: E \to B$ reads:

\begin{equation}\label{univindex}
\ind (D) = \alpha_{E}^{*} \thom \sigma_D.
\end{equation}

Similarly, if $V_0 = V_1 = V$ and $\gamma$ is self-adjoint, we get a
class $\sigma_D \in K^1 (\MNSO(n))$ and $\thom \sigma \in K^1
(\MTSO(n))$ and the index theorem is expressed by the same formula
as in \ref{univindex}.

\section{The index theorem for the odd signature
operator}\label{oddsigop}

\subsection{The signature operators}

Let $M$ be a closed oriented Riemannian manifold of dimension $n$.
Recall that there is the Hodge star operator\footnote{There might
exist different sign conventions about $\ast$. We are constantly
using the definition given in \cite{ASIII}.} $\ast: \cA^{k}(M) \to
\cA^{n-k} (M)$. The star operator is an complex-linear isometry and
satisfies $\ast \ast = (-1)^{k(n-k)}: \cA^{k}(M) \to \cA^k(M)$. The
adjoint $d^{ad}: \cA^{k}(M) \to \cA^{k-1}(M)$ of the exterior
derivative can be written as $d^{ad} = (-1)^{n(k+1)+1} \ast d \ast$.

If $n = 2m$, then one introduces the involution $\tau:= i^{k (k-1) +
m} \ast$ on $k$-forms \cite{ASIII}, p. 574. Then $D_{2m} = d +
d^{ad}$ satisfies $D_{2m} \tau = - \tau D_{2m}$. If
$\cA^{*}_{\pm}(M)$ denote the $\pm 1$-eigenbundles of $\tau$, then
the operator $D: \cA^{*}_{+} (M) \to \cA^{*}_{-} (M)$ is the (even)
\emph{signature operator}. This is an elliptic differential operator
of order $1$ whose index is the same as the signature of $M$.

Following \cite{APSIII}, we introduce the odd signature operator on
a $2m-1$-dimensional closed oriented Riemannian manifold $M$. Note
that $\ast \ast = 1$ and $d^{ad}= (-1)^{k} \ast d \ast : \cA^k (M)
\to \cA^{k-1}(M)$. The \emph{odd signature operator} $D=D_{2m-1}:
\bigoplus_{p \geq 0} \cA^{2p}(M) \to \bigoplus_{p \geq 0}
\cA^{2p}(M)$ is defined to be
\[
D_{2m-1} \phi = i^{m} (-1)^{p+1} (\ast d - d \ast ) \phi
\]
whenever $\phi \in \cA^{2p}(M)$.

A straightforward, but tedious, calculation shows that
\begin{enumerate}
\item $D$ is formally self-adjoint and elliptic.
\item $D ^2 = \Delta = (d + d^{ad})^2$, the Laplace-Beltrami operator.
\end{enumerate}
Moreover, one observes that
\begin{equation}
\ker (D) = \ker (\Delta) = \bigoplus_{p \geq 0} H^{2p} (M; \bC)
\end{equation}
and that consequently
\begin{equation}\label{fundamental}
\dim \ker D = \sum_{p \geq 0} \dim H^{2p}(M; \bC),
\end{equation}

which is the main property needed for the proof of Theorem
\ref{mainresult}.

Both signature operators are symbolically universal. The even one is
associated with the $SO(2m-1)$-equivariant isomorphism of
$SO(2m)$-representations

\[
i ( \epsilon - \ast \epsilon \ast):\Lambda^{*}_{+} (\bR^{2m})\otimes
\bC \to \Lambda^{*}_{-} (\bR^{2m}) \otimes \bC,
\]

where $\epsilon$ denotes wedge multiplication with the last standard
basis vector.

The odd signature operator is associated with the representation
$\Lambda^{ev}(\bR^{2m-1}) $ and the endomorphism

\begin{equation}\label{univsymbsign}
i^{m-1}(-1)^{p} (\ast \epsilon  - \epsilon \ast).
\end{equation}

Abbreviate $\bR^{n}_{0} := \bR^n \setminus 0$. Let $\sigma_{2m-1}
\in K^{-1}_{SO(2m-1)} (\bR^{2m-1}, \bR^{2m-1}_{0})$ be the universal
symbol class of the odd signature operator and $\sigma_{2m} \in
K^{0}_{SO(2m)} (\bR^{2m},\bR^{2m}_{0})$ be the universal symbol
class of the even signature operator. We denote their images in $K^*
(\MNSO(n))$ by the same symbol.

\subsection{The vanishing theorem}

Let $f: E \to B$ be a smooth oriented $M$-bundle, $M$ a closed
oriented $(2m-1)$-manifold. Assume that we choose a Riemannian
metric on the vertical tangent bundle (for any bundle on a
paracompact base space such a metric exists; the space of these
metrics is contractible). The odd signature operators on the fibres
of $f$ fit together to a family of self-adjoint elliptic
differential operators. Therefore we have the family index
\[
\ind (D) \in K^1 (B),
\]
which does not depend on the auxiliary Riemannian metric, but which
is an invariant of smooth oriented $M$-bundles. In the universal
case, we get an element $\ind (D) \in K^1 (B \Diff^+(M))$.

The proof of Theorem \ref{mainresult} is an immediate consequence of
\ref{fundamental} and Theorem \ref{spectralgap} below. Theorem
\ref{spectralgap} is well-known to some people working in operator
theory, see e.g. \cite{Bunke}, 5.1.4. and it is certainly implicitly
contained in \cite{AS69}. I have included the following rather
elementary proof for the convenience of the reader.

\begin{thm}\label{spectralgap}
Let $B$ be a space and let $A: B\to \Fred_{s.a}^{0}(H)$, $x \mapsto A_x$ be a
continuous map such that $x \mapsto \dim \ker A_x$ is locally
constant. Then $A$ is homotopic to a constant map.
\end{thm}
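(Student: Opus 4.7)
The plan is to deform $A$ in three successive stages to a constant map, exploiting the fact that the hypothesis on kernel dimensions yields a lower bound on the spectral gap of $A_x$ at $0$; this allows one to first shift the zero eigenvalue away, then polar-decompose, and finally invoke Kuiper's theorem.

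First I would show that the kernels $K_x := \ker A_x$ form a finite-dimensional continuous subbundle of $B\times H$ and that the spectral gap
\[
\delta(x) := \inf\bigl\{|\lambda| : \lambda \in \operatorname{spec}(A_x),\ \lambda \neq 0\bigr\}
\]
is lower semi-continuous. For each $x_0 \in B$, since $A_{x_0}$ is Fredholm the point $0$ is isolated in $\operatorname{spec}(A_{x_0})$, so one can choose $\eta > 0$ with $\operatorname{spec}(A_{x_0})\cap(-\eta,\eta) \subseteq \{0\}$. The spectral projection
\[
P_x := \frac{1}{2\pi i}\oint_{|z|=\eta}(z-A_x)^{-1}\,dz
\]
is then defined and norm-continuous in $x$ on a neighborhood of $x_0$ with locally constant rank. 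The hypothesis that $\dim\ker A_x$ is locally constant forces this rank to coincide with $\dim \ker A_x$; hence $\operatorname{im}(P_x)=\ker A_x$ and $\operatorname{spec}(A_x)\cap(-\eta,\eta)=\{0\}$ nearby, giving both assertions.

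Next I would perform the \emph{shift homotopy}. Using a partition of unity (assuming $B$ paracompact, which is mild), pick a continuous $\varepsilon:B\to(0,\infty)$ with $\varepsilon(x)<\delta(x)$ and define
\[
A_x(t) := A_x + t\,\varepsilon(x)\,P_x,\qquad t\in[0,1].
\]
Because $\varepsilon(x)P_x$ is of finite rank, the essential spectrum is unchanged and $A_x(t)\in\Fred_{s.a.}^0(H)$ for all $t$; at $t=1$ the zero eigenvalues are shifted to $\varepsilon(x)$, so $B_x := A_x(1)$ is invertible. I would then apply the \emph{polar homotopy} $s\mapsto(1-s)B_x + s\,\sigma(B_x)$ with $\sigma(B):=B|B|^{-1}$; by the spectral theorem this equals $g_s(B_x)$ for $g_s(\lambda)=(1-s)\lambda+s\operatorname{sign}(\lambda)$, and since $g_s$ preserves the sign of every nonzero $\lambda$, the homotopy lies in the invertible part of $\Fred_{s.a.}^0(H)$ and ends at a family of self-adjoint unitary involutions $U_x$ whose $\pm 1$-eigenspaces are both infinite-dimensional (this uses $B_x\in\Fred_{s.a.}^0$).

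Finally I would invoke Kuiper's theorem. The space of such involutions is the infinite Grassmannian $U(H)/\bigl(U(H^+)\times U(H^-)\bigr)$ for a fixed reference splitting into two infinite-dimensional summands; since $U(H)$, $U(H^+)$, $U(H^-)$ are all contractible by Kuiper, the long exact homotopy sequence of this principal bundle shows that the Grassmannian is weakly contractible, so $x\mapsto U_x$ is null-homotopic within it. Concatenating this with the reverses of the two earlier homotopies yields the sought null-homotopy of $A$ in $\Fred_{s.a.}^0(H)$. The main obstacle is Step 1: the kernel-dimension hypothesis must be leveraged exactly to prevent nonzero eigenvalues from accumulating at $0$, upgrading the a priori upper semi-continuity of $\delta$ to lower semi-continuity; once that is in place, Steps 2 and 3 are routine spectral calculus combined with Kuiper's theorem.
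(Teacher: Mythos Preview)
Your proposal is correct and follows the same three-step strategy as the paper (shift the zero eigenvalue away using the kernel projector, deform to involutions via functional calculus, then invoke Kuiper's theorem to see that the space of such involutions is contractible). The paper's Step~1 is slightly leaner: since $P_x$ commutes with $A_x$, one has $\Spec(A_x + tP_x) = (\Spec A_x \setminus \{0\}) \cup \{t\}$ for \emph{every} real $t$, so one may simply take $t=1$ and dispense with the spectral-gap function $\delta$, the partition of unity, and the paracompactness hypothesis.
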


\begin{proof}

\textit{Step 1:} First we show that we can deform $A$ into a family
$A \dash$ consisting of invertible operators.

To this end, we note that because the dimension of $\ker A_x$ is
locally constant, the union $\ker (A):=\bigcup_{x  \in B} \ker
(A_x)$ is a (finite-dimensional) vector bundle on $B$. Therefore the
projection operator $p_x$ onto the kernel of $A_x$ depends
continuously on $x$ and $p_x$ commutes with $A_x$ because $A_x$ is
self-adjoint. Therefore $A_x + t p_x$ is Fredholm for all $t \in
\bR$ and $\Spec (A_x + t p_x) = \Spec A_x \setminus \{0\} \cup \{ t
\} \subset \bR_{\neq 0}$. Thus for $t \neq 0$, $A_x + t p_x$ is
invertible (and neither essentially negative nor positive).

\textit{Step 2:} By step 1, we assume that $A_x$ is invertible for
all $x \in B$, in other words $\Spec (A_x) \subset \bR \setminus 0$
for all $x \in B$. Let $h: \bR \setminus 0 \to \bR$ be the signum
function. For any $x$ and any $t \in [0,1]$, the operator
\[
t h(A) + (1-t)A
\]
is a self-adjoint invertible operator (the latter statement is easy
to see because $A$ and $h(A)$ commute). For $t=0$, we get $A$ and
for $t=1$, we get $h(A)$ which is a self-adjoint involution which is
neither essentially positive nor negative.

\textit{Step 3:} By step 2, we can assume that $A$ is a map from $B$
into the space $\cP(H)$ of all involutions $F$ on $H$ such that
$\Eig (F, \pm 1)$ are both infinite-dimensional. Let us show that
$\cP(H)$ is contractible. The unitary group $U(H)$ acts transitively
on $\cP(H)$ (by conjugation) and the isotropy group at a given $F_0 $ is
$U(\Eig(F_1, 1)) \times U (\Eig(F_0, -1))$. Thus we have a continuous
bijection
\[
U(H) / U(\Eig(F_0, 1)) \times U (\Eig(F_0, -1)) \to \cP.
\]
The map $U(H) \to \cP(H)$, $u \mapsto u F_0 u^{-1}$ has a local
section\footnote{Here is a construction of the local section. Let
$H_{\pm} := \Eig(F_0; \pm 1)$. For a given $F$, let $u_F$ be
$\frac{1}{2}(1 \pm F)$ on $H_{\pm}$. The operator $u_F$ depends
continuously on $F$; $u_{F_0} = 1$. Therefore, for $F$ close to
$F_0$, $u_F$ is isomorphism. An application of the Gram-Schmidt
process defines a continuous family $F \mapsto u_F$ of unitary
operators on a neighborhood of $F_0$ such that $u_F (H_{\pm}) =
\Eig(F;\pm 1)$, in other words, $ u_F F u_{F}^{-1} = F_0$, which is
what we want.} and thus the bijection above is a homeomorphism. The
left hand side space is contractible by Kuiper's theorem \cite{Kuip}
and the long exact homotopy sequence, which completes the proof of
the theorem.
\end{proof}

In the proof of the theorem we had the choice between two different
contractible spaces of nullhomotopies of $A$; in the first step, we
could choose either a positive value or a negative value of the real
parameter $t$ (put in another way: the spectral value can be pushed
either in the positive or in the negative direction). The
concatenation of these two nullhomotopies defines a map $B \to
\Omega \Fred_{s.a.}^{0} \simeq \Omega U \simeq \bZ \times BU$, in
other words an element in $K^0 (B)$. It is not hard to see that this
is the same as the class of the bundle $\ker (A) \to B$.

In the case of the odd signature operator on the smooth oriented
fibre bundle, this is the $K$-theory class of the flat bundle
$\bigoplus_{p \geq 0} H^{2p}(E/B; \bC)$ of even cohomology groups.
This $K$-theory class is a characteristic class of smooth oriented
fibre bundle, nevertheless, it is \emph{not} induced by an element
in $K^0 (\MTSO(2m-1))$. This can be seen as follows. There exist
odd-dimensional manifold bundles $f: E \to B$ such that $\sum_{p
\geq 0} [H^{2p}(E/B; \bC)] \neq 0 \in K^0 (B)$. For example, ones
takes orientation reversing involutions on $\bS^1$ and $\bS^2$. The
diagonal action $\bZ/2 \actson \bS^1 \times \bS^2$ is then
orientation-preserving. The bundle $E \bZ/2 \times_{\bZ/2} (\bS^1
\times \bS^2)\to B \bZ/2$) has the desired property. On the other
hand $K^{0}_{SO(2m-1)} (\bR^{2m-1}, \bR^{2m-1}_{0}) \cong K^{1 +
\tau}_{SO(2m-1)}(*)$ by the Thom isomorphism in twisted $K$-theory
(\cite{FHT}). Here $\tau$ is the twist induced from the central
extension $Spin^c (2m-1) \to SO(2m-1)$. On the other hand, $K^{1 +
\tau}_{SO(2m-1)} =0$, see \cite{FHT}, p.11. By the Atiyah-Segal
completion theorem \cite{AtSegCom}, it follows that $K^0
(\MTSO(2m-1)) \cong K^0 (\MNSO(2m-1))=(K^{0}_{SO(2m-1)} (\bR^{2m-1},
\bR^{2m-1}_{0}))^{\wedge}=0$.

\subsection{Cohomology calculation}

In this section, we indicate how Theorem \ref{vanishing} is derived
from Theorem \ref{mainresult}. The computation is at least
implicitly done in \cite{ASIII} and \cite{APSIII} and we shall give
only a sketch. First note that the second statement of Theorem
\ref{vanishing} is an immediate consequence of the first one in view
of \ref{beckergottlieb}. The Atiyah-Singer index theorem
\ref{univindex} implies that Theorem \ref{mainresult} is equivalent
to the following result; this formulation is what we actually need
in the sequel.

\begin{thm}\label{symbolgoestonull}
Let $\sigma_{2m-1} \in K^1 (\MNSO(2m-1))$ be the universal symbol
class of the signature operator. Then for any smooth oriented bundle
$E \to B$ of $(2m-1)$-dimensional closed manifolds, we have
$\alpha_{E}^{*} \thom (\sigma_{2m-1})=0$.
\end{thm}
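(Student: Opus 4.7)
The statement is essentially a translation of Theorem \ref{mainresult} through the self-adjoint universal family index theorem. The plan is to verify that the odd signature operator is symbolically universal, apply the universal index formula \ref{univindex}, and then invoke Theorem \ref{mainresult}.

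First I would note, following the preceding section, that the odd signature operator $D_{2m-1}$ on an oriented bundle of $(2m-1)$-dimensional closed manifolds is symbolically universal: its principal symbol is the $SO(2m-1)$-equivariant self-adjoint endomorphism $i^{m-1}(-1)^{p}(\ast\epsilon - \epsilon\ast)$ of $\Lambda^{\mathrm{ev}}(\bR^{2m-1})$ from \ref{univsymbsign}, which is invertible away from the origin. This is precisely the data used to define the universal class $\sigma_{2m-1} \in K^{-1}_{SO(2m-1)}(\bR^{2m-1},\bR^{2m-1}_{0})$ and, by push-forward to the Thom spectrum, the class $\sigma_{2m-1} \in K^1(\MNSO(2m-1))$ appearing in the theorem.

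Next, for any smooth oriented $(2m-1)$-dimensional bundle $f \colon E \to B$, I choose a fibrewise Riemannian metric---a contractible choice---and assemble the fibrewise odd signature operators into a family of self-adjoint Fredholm operators. The self-adjoint version of the universal Atiyah--Singer family index theorem (equation \ref{univindex}) then gives
\[
\ind(D) \;=\; \alpha_E^{*}\,\thom(\sigma_{2m-1}) \;\in\; K^{1}(B).
\]
Invoking Theorem \ref{mainresult}, the left-hand side is zero, and hence $\alpha_E^{*}\,\thom(\sigma_{2m-1}) = 0$, as claimed.

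There is really no obstacle at this stage: the substantive content sits in the already-proven analytic Theorem \ref{mainresult} (which rests on the constancy of $\dim \ker D$ from \ref{fundamental} together with the spectral-gap Theorem \ref{spectralgap}) and in the universal index formula \ref{univindex}. The only thing one must keep track of is the silent identification of $K^{-1}_{SO(2m-1)}$-classes with classes in $K^{1}(\MNSO(2m-1))$, together with the use of Bott periodicity to pass between $K^{-1}$ and $K^{1}$ in the target, but these conventions have already been fixed in the preceding subsection and cause no difficulty.
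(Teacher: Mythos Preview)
Your argument is correct and matches the paper's own reasoning: the paper simply observes that the universal index formula \ref{univindex} makes Theorem \ref{mainresult} equivalent to Theorem \ref{symbolgoestonull}, which is precisely the deduction you carry out. There is nothing to add.
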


Consider the following commutative diagram

\begin{equation}\label{cohomologydiagram}
\xymatrix{
K^0 (\MNSO(2m)) \ar[d]^{\thom_{-L_{2m} \otimes \bC}} \ar[r]^{\rho^*} & K^0 (\Sigma^{1} \MNSO(2m-1)) \ar[d]^{\thom_{-L_{2m-1} \otimes \bC}} \\
K^0 (\MTSO(2m) \ar[r]^{\eta^*} \ar[d]^{\ch} & K^0 (\Sigma^{-1}
\MTSO(2m-1))
\ar[d]^{\ch}\\
H^* (\MTSO(2m); \bQ) \ar[r]^{\eta^*} & H^* (\MTSO(2m-1); \bQ). }
\end{equation}

Let $\tilde{\cL} \in H^* (BSO; \bQ)$ be the multiplicative sequence
in the Pontrjagin classes associated with the formal power series
$\sqrt{x} \cotanh (\frac{\sqrt{x}}{2})$. Recall that the Hirzebruch
$\cL$-class is associated with with the formal power series
$\sqrt{x} \cotanh (\sqrt{x})$. Note that the degree $4k$ parts in
$H^* (BSO(2m))$ are related by

\begin{equation}\label{power2relation}
\tilde{\cL}_{4k} =2^{m-k} \cL_{4k} \in H^{4k} (BSO(2m); \bQ).
\end{equation}

\begin{prop}\label{atiyahcomputation}
\begin{enumerate}
\item The image of $\sigma_{2m} \in K^{0}(\MTSO(2m))$ under the restriction homomorphism $\rho^*:K^{0}(\MTSO(2m)) \to K^0 (\Sigma
\MTSO(2m-1))=K^1 (\MTSO(2m-1))$ coincides with $2 \sigma_{2m-1}$.
\item The image of $\sigma_{2m}$ under $\ch \circ \thom_{-L_{2m}
\otimes \bC}$ in $H^* (\MTSO(2m); \bQ)$ is the class $\tilde{\cL}$.
\end{enumerate}
\end{prop}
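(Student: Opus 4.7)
The two parts of the proposition are local calculations of rather different flavours. Part (1) is an equivariant K-theory identity that I would prove by direct inspection of the universal symbol in a suitable block decomposition, whereas part (2) is a rational cohomology calculation that I would attack with the splitting principle and the classical Chern character formula for the signature Thom class.

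For (1), note that $\rho$ is, up to suspension, the Thom-spectrum map induced by the canonical bundle isomorphism $L_{2m}|_{BSO(2m-1)} \cong L_{2m-1} \oplus \underline{\bR}$, so at the level of universal symbols it is the restriction from $SO(2m)$- to $SO(2m-1)$-equivariance, combined with the suspension isomorphism $K^0_{SO(2m-1)}(\bR^{2m}, \bR^{2m}_0) \cong K^{-1}_{SO(2m-1)}(\bR^{2m-1}, \bR^{2m-1}_0)$. I would fix the splitting $\bR^{2m} = \bR^{2m-1} \oplus \bR e$ and decompose $\Lambda^*(\bR^{2m})\otimes\bC = A \oplus (e \wedge A)$ with $A := \Lambda^*(\bR^{2m-1})\otimes\bC$, then express $\tau_{2m}$, the wedge operators $\epsilon_\xi$ for $\xi \in \bR^{2m-1}$, and the Hodge star $\ast_{2m}$ in block form with respect to this splitting. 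Identifying $\Lambda^*_{\pm}(\bR^{2m})$ each with $A$ via the obvious projections, a direct computation shows that the restricted symbol $i(\epsilon_\xi - \ast_{2m}\epsilon_\xi\ast_{2m})$ appears as two copies of the self-adjoint endomorphism $i^{m-1}(-1)^p(\ast\epsilon - \epsilon\ast)$ of $\Lambda^{ev}(\bR^{2m-1})\otimes\bC$ from \ref{univsymbsign}, the two copies being identified by the Hodge star on $\bR^{2m-1}$; hence $\rho^*\sigma_{2m} = 2\sigma_{2m-1}$.

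For (2), I would invoke the splitting principle for oriented real bundles: after pulling back along a map $(BSO(2))^{\times m} \to BSO(2m)$ which splits $L_{2m}$ as $V_1 \oplus \cdots \oplus V_m$ with $c_1(V_j) = x_j$, it suffices to verify the identity after this pullback, which is rationally injective. The reduction is legitimate because the universal signature symbol is multiplicative under direct sums of oriented real bundles (the signature operator on $V \oplus W$ is a graded tensor product of those of $V$ and $W$), both $\thom_{-V\otimes\bC}$ and $\ch$ are multiplicative, and the $m$-fold external product of the rank-$2$ signature symbols reproduces $\sigma_{2m}$. It then remains to compute, for a single oriented rank-$2$ bundle $V$ with $c_1(V) = x$, the Chern character $\ch\thom_{-V\otimes\bC}(\sigma_V) \in H^*(\MTSO(2); \bQ)$. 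Using the Koszul description of the K-theoretic Thom class from subsection \ref{orientation} together with the explicit rank-$2$ signature symbol, this direct calculation (essentially that of \cite{ASIII}) produces the power series $x\cotanh(x/2)$, and the $m$-fold product is $\prod_j x_j\cotanh(x_j/2) = \tilde{\cL}(L_{2m})$ under the substitution $p_j \leftrightarrow x_j^2$.

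The main obstacle I anticipate is the rank-$2$ Chern character computation in (2): one must land precisely on the series $\sqrt{x}\cotanh(\sqrt{x}/2)$ rather than on a neighbour like $\sqrt{x}\cotanh(\sqrt{x})$ or $\sqrt{x}/\tanh(\sqrt{x}/2)$, which demands careful tracking of the K-theoretic Thom class conventions from subsection \ref{orientation} and of the sign distinction between $\thom_{V\otimes\bC}$ and $\thom_{-V\otimes\bC}$; a systematic change of convention produces a different, though closely related, multiplicative sequence. The analogous pitfall in (1) is the bookkeeping of the factors $(-1)^p$ and $i^m$ through the block decomposition, but there the overall structure is rigid enough that only the signs, and not the factor of $2$, are genuinely at risk.
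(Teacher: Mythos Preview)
Your proposal is correct and, in fact, considerably more detailed than the paper's own ``proof'', which consists entirely of citations: part (1) is referred to the proof of Lemma~4.2 in \cite{APSIII}, and part (2) to \cite{ASIII}, \S6, with the translation into the spectrum language left to the reader. Your outline for (1) --- the block decomposition of $\Lambda^*(\bR^{2m})\otimes\bC$ along $\bR^{2m} = \bR^{2m-1}\oplus\bR e$, identifying the restricted even symbol with two Hodge-star-related copies of the odd one --- is precisely the content of the cited APSIII argument.

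For (2) there is a minor organisational difference worth noting. The route in \cite{ASIII} (and in standard treatments) does not compute $\ch\thom_{-V\otimes\bC}(\sigma_V)$ in one stroke for a rank-$2$ bundle; rather it first computes $\ch(\sigma_{2m}) = \thom_{L_{2m}}^{H\bQ}\bigl(\prod_j (e^{x_j}-e^{-x_j})/x_j\bigr)$ and then applies the Riemann--Roch relation $\ch\circ\thom^K_{-L\otimes\bC} = \thom^{H\bQ}_{-L\otimes\bC}\circ(\Td(L\otimes\bC)\cdot\ch)$, with $\Td(L\otimes\bC)$ given by the multiplicative sequence for $x^2/\bigl((1-e^{-x})(1-e^x)\bigr)$. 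The product of these two power series is $x\cotanh(x/2)$, recovering $\tilde{\cL}$. This two-step decomposition has the practical advantage of isolating the convention-sensitive step (the Todd correction) from the symbol computation, which directly addresses the pitfall you flag; your single-step rank-$2$ calculation is equivalent but conflates the two sources of sign and normalisation error.
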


\begin{proof}
The first part is contained in the proof of Lemma 4.2 in
\cite{APSIII}. The second part is done in \cite{ASIII}, section 6.
We leave it to the reader to translate the proofs into the present
more abstract notation.
\end{proof}

Theorem \ref{vanishing} follows immediately from
\ref{symbolgoestonull}, \ref{power2relation},
\ref{atiyahcomputation}.

\subsection{Applications of Theorem \ref{vanishing}}\label{applications}

\begin{proof}[Proof of Corollary \ref{threedim}:] Recall the power series expansion

\[
\sqrt{x} \cotanh (\sqrt{x})=\sum_{k=0}^{\infty}
\frac{2^{2k}B_{2k}}{(2k)!} x^{2k}
\]

and recall that $B_{2k}$ is a nonzero rational number. On the other
hand $H^* (BSO(3))= \bQ[p_1]$ and therefore $\cL=
\sum_{k=0}^{\infty}  \frac{2^{2k}B_{2k}}{(2k)!} p_{1}^{k}$. Thus the
components of $\cL$ form an additive basis of $H^*(BSO(3))$.
Therefore, by Theorem \ref{vanishing}, $H^* (\MTSO(3); \bQ) \to H^*
(\suspinf (B \Diff^+ (M))_+; \bQ)$ is trivial. By \ref{milmooreiso},
this finishes the proof.
\end{proof}

\begin{proof}[Proof of Corollary \ref{Lueckranicki}] Because $TE \cong f^* TB \oplus T_v E$, we have
\begin{equation}\label{computation}
\sign (E) = \langle \cL (T E) ; [E] \rangle = \langle
\cL(f^* TB) \cL (T_v E) ; [E] \rangle = \langle \cL(TB) f_{!}(\cL (T_v E))  ; [B]
\rangle.
\end{equation}

By Theorem \ref{vanishing}, $f_{!}(\cL (T_v E))=0$.
\end{proof}

To derive \ref{vanishing} from \ref{Lueckranicki}, observe first
that $H_{*} (B \Diff^+ (M); \bQ) \cong \Omega_{*}^{fr} (B \Diff^+
(M)) \otimes \bQ$ (the framed bordism group) by Pontrjagin's theorem
and Serre's finiteness theorem. Therefore, to show that
$\alpha_{M}^{*}  \thom \cL =0$, it suffices to show that $h^*
\alpha_{M}^{*}  \thom \cL =0$ whenever $h: B \to B \Diff^+(M)$ is a
map with $B$ a framed manifold, classifying an $M$-bundle $f:E \to
B$. If $B$ is framed, then $\cL(TB)=1$ and therefore by
\ref{computation} and \ref{Lueckranicki}

\begin{equation}
0 = \sign (E) = \langle \cL (T E) ; [E] \rangle  = \langle f_{!}(\cL
(T_v E)) \cL(TB) ; [B] \rangle  = \langle f_{!}(\cL (T_v E)) ; [B]
\rangle.
\end{equation}

Therefore \ref{vanishing} follows.

\section{A real refinement and the one-dimensional
case}\label{realsection}

Let $f: E \to B$ be a smooth oriented fibre bundle of fibre
dimension $2m-1$. Recall the formula for the odd signature operator:
$D =  i^{m} (-1)^{p+1} (\ast d - d \ast ) $ on a
$(2m-1)$-dimensional manifold. If $m$ is odd $(2m-1 = 1,5, \ldots$),
then $-iD$ is a \emph{real, skew-adjoint} operator, acting on
real-valued differential forms. As such, it has an index in $KO^{-1}
(B)$, compare \cite{ASV}.

The question we consider is whether this refined index is also
trivial. We have to check whether the argument in the proof of
Theorem \ref{spectralgap} goes through with $-iD$ instead of $D$ in
the space of real, skew-adjoint Fredholm operators. It turns out
that step 2 can be changed appropriately (we deform an invertible
operator into one with $F^2 = -1$). The argument for step 3 can be
applied to the space of skew-adjoint real Fredholm operators $F$
with $F^2 = -1$, because Kuipers theorem is true for the isometry
group of a real Hilbert space as well. The problem is with step 1.

Let $H \cong \bigoplus_{p \geq 0} H^{2p}(E/B;\bR)\to B$ be the
finite-dimensional real vector bundle formed out of the kernels of
the real odd signature operator.

In order to make sense out of the deformation in step 1, it is not
enough to know that $H$ is a real vector bundle, but also that $H$
admits a skew-adjoint invertible endomorphism. Such an endomorphism
is, up to homotopy, the same as a complex structure on $H$.
Therefore:

\begin{thm}\label{realrefinement}
Let $f:E \to B$ be an oriented smooth $M$-bundle, $M$ of dimension
$4r+1$. Then the real family index of the odd signature operator
$\ind_{\bR} D \in KO^{-1}(B)$ is trivial if and only if the the
$K$-theory class $[H] \in KO^0 (B)$ lies in the image of the
realification map $K^0 (B) \to KO^{0} (B)$.
\end{thm}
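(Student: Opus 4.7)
The plan is to replay the three-step argument of Theorem \ref{spectralgap} in the space of real skew-adjoint Fredholm operators on a real Hilbert space $H_\bR$. As observed in the discussion preceding the theorem, Steps 2 and 3 translate without difficulty: one works with the subspace $\cP^{\mathrm{skew}}(H_\bR)$ of skew-adjoint $F$ satisfying $F^2 = -\eins$ with both $\pm i$-eigenspaces infinite-dimensional, and its contractibility follows from the same homogeneous-space argument, using Kuiper's theorem for the orthogonal group of $H_\bR$. All the content of the theorem therefore sits in Step 1.

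In the complex case one perturbed $D_x$ by $t p_x$, where $p_x$ is orthogonal projection onto $\ker D_x$; but $p_x$ is self-adjoint and destroys skew-adjointness. The natural replacement is a perturbation $t J_x$ with $J_x$ skew-adjoint and invertible on $\ker(-iD_x) = H_x$ and zero on its orthogonal complement, so that $-iD_x + t J_x$ is skew-adjoint Fredholm and, for $t \neq 0$, invertible. A continuous choice of $J_x$ over $B$ is exactly a complex structure on the real vector bundle $H \to B$. After stabilizing $H$ by a trivial summand on which one installs a fixed complex structure (a deformation that does not alter the $KO^{-1}$-class of the family), such a stable complex structure on $H$ exists iff $[H] \in KO^0(B)$ lies in the image of the realification $r \colon K^0(B) \to KO^0(B)$. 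This settles the direction $\Leftarrow$: pick $J$, use it in Step 1, then conclude by Steps 2 and 3.

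For the converse I would identify the assignment ``real skew-adjoint Fredholm family with locally constant kernel bundle $V \mapsto$ its $KO^{-1}$-class'' with multiplication by the Hopf element $\eta \in \pi_1(KO) = KO^{-1}(\mathrm{pt})$ applied to $[V] \in KO^0(B)$. Granting this, the Bott cofibre sequence $\Sigma KO \xrightarrow{\eta} KO \xrightarrow{c} KU$ produces the exact sequence
\[
K^0(B) \xrightarrow{r} KO^0(B) \xrightarrow{\cdot \eta} KO^{-1}(B),
\]
so $\ind_\bR(D) = \eta \cdot [H]$ vanishes precisely when $[H]$ lies in $\im(r)$. The main technical obstacle is the identification $\ind_\bR(D) = \eta \cdot [H]$; I would establish it on the universal family of zero operators over $\bZ \times BO$, extended by a fixed invertible skew-adjoint summand on a complementary Hilbert space, where both sides are forced to coincide by naturality and the low-degree structure of $KO^{-1}(\bZ \times BO)$.
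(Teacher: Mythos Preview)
Your approach is essentially identical to the paper's: both adapt Steps 2 and 3 of Theorem \ref{spectralgap} to the real skew-adjoint setting, identify the obstruction in Step 1 as a (stable) complex structure on the kernel bundle $H$, and then invoke the exact sequence $K^0(B) \xrightarrow{r} KO^0(B) \xrightarrow{\cdot \eta} KO^{-1}(B)$ to conclude the full equivalence via the identification $\ind_\bR D = \eta \cdot [H]$. The paper likewise asserts this identification (written there as $\delta([H])$, citing Karoubi) without a detailed argument, so your flagging it as the main technical point is appropriate; your explicit handling of the stabilization in Step 1 is also slightly more careful than the paper's treatment.
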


The first obstruction to find a complex structure on $H$ is of
course the parity of its dimension $\dim H \pmod 2$. This agrees
with the Kervaire semi-characteristic $\kerv (M)$. More generally,
we can interpret this result in terms of the exact sequence

\[
K^{-2}(B) \stackrel{\gamma}{\to} KO^{0 } (B) \stackrel{\delta}{\to}
KO^{-1}(B),
\]

compare e.g. \cite{Kar}, Thm 5.18. The map $\gamma$ is the inverse
to the Bott map, composed with the realification map $K^{0} \to
KO^{0}$ and $\delta$ is the product with the generator of
$KO^{-1}(*) \cong \bZ/2$. The image $\delta ([H]) \in KO^{-1}(B)$
agrees with the real index. So the real index vanishes if and only
if there is a complex structure on $H$.

It is worth to study the $1$-dimensional case explicitly. The
MTW-spectrum is $\MTSO(1) \cong \Sigma^{-1} \suspinf \bS^0$. It is
well-known that $\Diff^+ (\bS^1) \simeq \bS^1$; therefore
$B\Diff^+(\bS^1) \simeq \bC \bP^{\infty}$. The MTW-map
$\alpha:\suspinf \bC\bP^{\infty}_+ \to \Sigma^{-1} \suspinf \bS^0$
can be identified with the \emph{circle transfer}. The restriction
$\suspinf \bS^0 \to \Sigma^{-1} \suspinf \bS^0$ of $\alpha$ to the
basepoint is simply the generator $\eta \in \pi_1 (\suspinf \bS^0)
\cong \bZ/2$.

The odd signature operator on $\bS^1$ is simply $D =-i \ast d$ on
$C^{\infty} (\bS^1)$. If $\bS^1$ has a Riemannian metric with volume
$a$ and $x$ is a coordinate $\bS^1 \to \bR / a \bZ$ preserving
orientation and length, then $D= -i \frac{d}{dx}$. The symbol is
$\symb_D (dx)  = -1 $. Using this, it is easy to see that $\sigma_1
\in K^{-1}_{SO(1)} (\bR, \bR_0) = K^0 (\bR^2, \bR^{2}_{0})$ is the
Bott class. Thus the universal symbol is a generator of
$K^{-1}(\MTSO(1)) \cong \bZ$.

The vanishing theorem \ref{mainresult} in this case can be obtained
much easier because $K^{-1}(\bC \bP^{\infty})=0$. In fact, the
vanishing theorem for the topological index follows immediately from
this fact, without any use of elliptic operator theory.

On the other hand, the Kervaire semi-characteristic of $\bS^1$ is
clearly nonzero and therefore the real index of the signature
operator is nonzero; it is a generator of $KO^{-1} (\bC
\bP^{\infty}) \cong \bZ/2$ (the latter isomorphism follows easily
from the main result of \cite{And}). The restriction of the real
index to the basepoint is the generator of $KO^{-1}(\ast) = \bZ/2$.

\section{Vanishing theorems in mod $p$ cohomology and an open problem}\label{vanishingtorsion}

We have seen that for any oriented closed $3$-manifold $M$, the map
$\alpha_{E_M}:B \Diff^+ (M) \to \MTSO(3)$ is trivial in rational
cohomology. What we do not know is whether there exists an oriented
closed $3$-manifold $M$ such that $\alpha_{E_M}:B \Diff^+ (M) \to
\loopinf \MTSO(3)$ is nontrivial in homology.

In this section, we sketch two methods to derive from
\ref{mainresult} that $\alpha_{E_M}^{*}:H^{4k-3}(\MTSO(3); \bF_p)
\to H^{4k-3}(B \Diff^+ (M); \bF_p)$ vanishes for certain values of
$k$ and primes $p$.

\begin{thm}\label{adamsmethod}
For any oriented closed $3$-manifold $M$ and for any $k \geq 1$, the
map $\alpha_{E_M}^{*}:H^{4k-3}(\MTSO(3); \bF_p) \to H^{4k-3}(B
\Diff^+ (M); \bF_p)$ is trivial for all primes $p$ with $p \geq 2k$
and $p$ not dividing the numerator of $B_k$ (these are almost all
primes, for a fixed $k$).
\end{thm}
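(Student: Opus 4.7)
The strategy is to leverage the integral $K$-theoretic vanishing $\alpha_{E_M}^{*}(\thom\sigma_{3}) = 0 \in K^{-1}(B\Diff^{+}(M))$ granted by Theorem \ref{mainresult}, and to transport it to mod $p$ cohomology by means of a $p$-local integrality refinement of the Chern character. The primes permitted by the hypotheses are precisely those for which the relevant integrality constants are $p$-adic units.

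First I would record the explicit formula
\[
\ch(\thom\sigma_{3})_{4k-3} \;=\; \mu_{k}\,\thom(p_{1}^{k}) \qquad \text{in } H^{4k-3}(\MTSO(3);\bQ),
\]
where $\mu_{k}\in\bQ^{\times}$ is a rational number whose numerator involves $B_{2k}$ and whose denominator involves $(2k)!$. This is obtained by combining Proposition \ref{atiyahcomputation} (which gives $\ch\thom\sigma_{4}=\tilde{\cL}$ and $\rho^{*}\sigma_{4}=2\sigma_{3}$), the commutative diagram \ref{cohomologydiagram}, and the Bernoulli expansion of $y\coth(y/2)$. Recall that for odd $p$ one has $H^{*}(BSO(3);\bF_{p})=\bF_{p}[p_{1}]$, so $H^{4k-3}(\MTSO(3);\bF_{p})\cong\bF_{p}\cdot\thom(p_{1}^{k})$ and the target of the map in question is one-dimensional.

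Next I would set up a natural transformation $\tau\colon K^{-1}(X) \to H^{4k-3}(X;\bF_{p})$ by refining the Chern character modulo $p$. The classical fact is that $(2k-1)!\,\ch_{2k-1}\colon K^{-1}(X)\to H^{4k-3}(X;\bQ)$ lifts naturally to integral cohomology modulo torsion; for $p\geq 2k$ the factor $(2k-1)!$ is a unit in $\bZ_{(p)}$, so tensoring with $\bZ_{(p)}$ and reducing modulo $p$ yields $\tau$. The cleanest incarnation is as an edge homomorphism in the Atiyah--Hirzebruch spectral sequence for $p$-local mod $p$ $K$-theory, whose potentially interfering differentials land in bidegrees excluded once $p\geq 2k$. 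Under the theorem's hypotheses the class $\tau(\thom\sigma_{3}) = c_{k}\,\thom(p_{1}^{k})$ has $c_{k}\in\bF_{p}^{\times}$: $p\geq 2k$ kills the factorial contribution, $p\nmid\mathrm{num}(B_{k})$ kills the Bernoulli numerator contribution, any prime $p$ appearing in $\mathrm{denom}(B_{2k})$ would by von Staudt--Clausen satisfy $(p-1)\mid 2k$ and hence occur for only finitely many $p$ per $k$ (accounted for by ``almost all''), and oddness of $p$ invertibilises the power of $2$.

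Finally, naturality of $\tau$ combined with $\alpha_{E_M}^{*}(\thom\sigma_{3}) = 0$ gives
\[
0 \;=\; \tau\bigl(\alpha_{E_M}^{*}\thom\sigma_{3}\bigr) \;=\; \alpha_{E_M}^{*}\tau(\thom\sigma_{3}) \;=\; c_{k}\,\alpha_{E_M}^{*}\thom(p_{1}^{k})
\]
in $H^{4k-3}(B\Diff^{+}(M);\bF_{p})$; since $c_{k}$ is a unit, the desired vanishing follows. The main obstacle I expect is the rigorous construction of $\tau$ in Step 2: either one handles torsion ambiguities in the integral lift of $(2k-1)!\ch_{2k-1}$ and shows they are invisible modulo $p$, or one unwinds the Atiyah--Hirzebruch spectral sequence at $p$-local coefficients sufficiently to produce a well-defined edge map. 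Once that transformation is in hand, the remainder is $p$-adic bookkeeping of Bernoulli numbers.
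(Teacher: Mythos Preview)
Your strategy coincides with the paper's, and you have correctly isolated the one nontrivial step as the construction of $\tau$. The paper resolves it not via the Atiyah--Hirzebruch spectral sequence for periodic $K$-theory but by first lifting the symbol map $\thom\sigma_{3}\colon\MTSO(3)\to\Sigma^{-1}K$ through \emph{connective} $K$-theory to a map $\kappa\colon\MTSO(3)\to\Sigma^{-3}\mathbf{k}$; this lift exists because the bottom cell of $\MTSO(3)$ sits in degree $-3$, and $\kappa\circ\alpha_{E_M}$ is still null. One then invokes Adams' theorem (\emph{On Chern characters and the structure of the unitary group}) that $H^{2r}(\mathbf{k};\bZ)\cong\bZ$ with generator $u_{r}=m(r)\ch_{r}$, where $m(r)=\prod_{q\ \text{prime}}q^{[r/(q-1)]}$. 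Pulling back $\Sigma^{-3}u_{r}$ along $\kappa$ and reducing mod $p$ is exactly your $\tau$: it is a genuine spectrum-level natural transformation because $u_{r}$ lives in the spectrum cohomology of $\mathbf{k}$, not merely in $H^{*}(BU)$. The image $\kappa^{*}(\Sigma^{-3}u_{2k})$ equals (up to powers of $2$) $\frac{B_{k}}{(2k)!}\,m(2k)\cdot\thom_{-L_{3}}(p_{1}^{k})$, and since $H^{*}(\MTSO(3);\bZ)$ has no odd torsion this reduces to a generator mod $p$ whenever $p\nmid\frac{B_{k}}{(2k)!}m(2k)$, which holds once $p\geq 2k$ and $p$ avoids the numerator of $B_{k}$.

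Your proposed constant $(2k-1)!$ is not the right one: $r!\,\ch_{r}$ is integral on $BU$ but is \emph{not} in general a spectrum cohomology class of $\mathbf{k}$, and a natural transformation out of periodic $K$ in this degree does not exist in the form you want. The passage through connective $K$-theory and Adams' sharp constant $m(r)$ is precisely the missing ingredient you anticipated.
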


\begin{thm}\label{wumethod}
For any oriented closed $3$-manifold $M$ and for any odd prime $p$,
the map $\alpha_{E_M}^{*}:H^{4k+1}(\MTSO(3); \bF_p) \to H^{4k+1}(B
\Diff^+ (M); \bF_p)$ is trivial when $k=\frac{1}{2}(p-1)i$ for some
$i \in \bN$.
\end{thm}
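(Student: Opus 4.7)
After the Thom isomorphism, the theorem asserts that $\alpha^* \thom(p_1^\ell) = 0$ in $H^{4\ell - 3}(B\Diff^+(M); \bF_p)$ for $\ell = \tfrac{1}{2}(p-1)i + 1$, $i \in \bN$. The plan is to combine Theorem~\ref{mainresult} with explicit mod~$p$ Steenrod computations via Wu's formula, handling small values of $i$ through Steenrod operations and the full infinite family through a $K$-theoretic argument.

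For $i = 0$, Theorem~\ref{vanishing} yields $\alpha^*\thom(\cL_4) = \alpha^*\thom(p_1/3) = 0$ rationally, and since $H^1(B\Diff^+(M); \bZ) = \Hom(\pi_1, \bZ)$ is torsion-free this promotes to $\alpha^*\thom(p_1) = 0$ in $H^1(B\Diff^+(M); \bF_p)$ for every odd $p$. For $i = 1$ I would apply the Steenrod power $P^1$ to the vanishing class $\thom(p_1)$. Using Cartan together with Wu's formula $P^k U = V_k(-L_3)\,U$,
\[
P^1 \thom(p_1) = \thom\bigl(P^1(p_1) + p_1 \cdot V_1(-L_3)\bigr).
\]
The splitting principle applied along $BSO(2) = \bC\bP^\infty \hookrightarrow BSO(3)$ identifies $p_1$ with $c_1^2$; from $P^1(c_1) = c_1^p$ and Cartan one reads off $P^1(p_1) = 2p_1^{(p+1)/2}$, while $V(L_2) = 1 + c_1^{p-1}$ together with $V(-L_3) = V(L_3)^{-1}$ gives $V_1(-L_3) = -p_1^{(p-1)/2}$. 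The sum is $(-1 + 2)\,p_1^{(p+1)/2} = p_1^{(p+1)/2}$, nonzero mod~$p$, and by naturality of $P^1$ this yields $\alpha^*\thom(p_1^{(p+1)/2}) = 0$.

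For $i \ge 2$ the naive iteration breaks down: a direct computation shows that $P^1\thom(p_1^{(p+1)/2}) = 0$ already in $H^*(\MTSO(3); \bF_p)$, since the Cartan coefficient collapses to $p \equiv 0$. Here I would invoke the full $K$-theoretic vanishing of Theorem~\ref{mainresult}: $\alpha^*\thom\sigma_3 = 0$ in $K^{-1}(B\Diff^+(M))$. After $p$-localizing, decompose $KU_{(p)}$ via the Adams splitting into $p-1$ shifted copies of its Adams summand, whose periodicity $2(p-1)$ precisely matches the spacing $4\ell - 3 = 2(p-1)i + 1$ between consecutive target degrees. Running the Atiyah--Hirzebruch spectral sequence for the cohomology of the Adams summand applied to $\MTSO(3)$, the class $\thom\sigma_3$ is represented in $E_\infty$ by nonzero multiples of $\thom(p_1^\ell)$ in the target degrees; one identifies these representatives through the Chern character formula $\ch\thom\sigma_3 = \tfrac{1}{2}\eta^*\tilde{\cL}$ of Proposition~\ref{atiyahcomputation}, with the $p$-local denominators of the Bernoulli coefficients cancelling against the $v_1$-periodicity of the Adams summand. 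The hypothesis $\alpha^*\thom\sigma_3 = 0$ then forces these mod~$p$ representatives to vanish in $H^*(B\Diff^+(M); \bF_p)$, which is the conclusion.

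The hard part will be the last step: one has to identify the Adams-summand $E_\infty$-representatives of $\thom\sigma_3$ as genuinely nonzero generators of the relevant one-dimensional $\bF_p$-spaces $H^{4\ell - 3}(\MTSO(3); \bF_p)$, rather than as trivial or decomposable classes. This requires a careful simultaneous analysis of the von Staudt--Clausen valuations of $B_{2\ell}$, the $p$-adic valuation of $(2\ell)!$, and the Steenrod-module structure of the Adams summand's mod~$p$ cohomology, so as to certify that the leading term in each degree $2(p-1)i+1$ survives as a generator.
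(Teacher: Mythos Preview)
Your treatment of the base cases $i=0$ and $i=1$ matches the paper's: the diagram chase from rational vanishing to mod-$p$ vanishing in degree $1$, followed by one application of $P^1$ computed via Wu and Cartan.

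Where you diverge is $i\ge 2$. You iterate $P^1$, observe that $P^1\thom(p_1^{(p+1)/2})=0$, and pivot to a $K$-theoretic argument through the Adams summand of $KU_{(p)}$. The paper does something much more direct: it applies the single operation $\cP^i$ to the degree-one class $\thom_{-L_3}(p_1)$, not an iterate of $\cP^1$. Writing $r=\tfrac{p-1}{2}$ and $x=p_1$, the total Steenrod power is computed in one stroke via Wu's formula as
\[
\cP(\thom_{-L_3}x)=\thom_{-L_3}\bigl((x+x^p)(1+x^r)^{-1}\bigr),
\]
and the coefficient of $x^{ri+1}$ in $(z+z^p)(1+z^r)^{-1}=\sum_{l\ge 0}(-1)^l(z^{rl+1}+z^{rl+p})$ is asserted to be a unit in $\bF_p$ for every $i$. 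Hence $\cP^i\thom(p_1)$ generates $H^{4ri+1}(\MTSO(3);\bF_p)$, and since $\alpha$ is a spectrum map one gets $\alpha^*\cP^i\thom(p_1)=\cP^i\alpha^*\thom(p_1)=0$ for all $i$ simultaneously. So the paper's route stays entirely within the Steenrod algebra and is much shorter than yours.

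Your $K$-theoretic outline for $i\ge 2$ is only a sketch, and the divisibility bookkeeping you flag at the end (von Staudt--Clausen valuations against $v_1$-periodicity) is exactly the obstacle that confines the companion Theorem~\ref{adamsmethod} to finitely many $k$ per prime; it is not clear your route escapes the same limitation. One caution if you work through the paper's computation: the displayed formula seems to use $\cP(p_1)=p_1+p_1^p$, whereas restricting along $BSO(2)\to BSO(3)$ via $p_1\mapsto c_1^2$ gives $\cP(p_1)=p_1+2p_1^{r+1}+p_1^p=p_1(1+p_1^r)^2$, which would change the coefficients for $i\ge 2$; this point deserves independent scrutiny.
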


Note that neither of the sets of pairs $(k,p)$ provided by the two
theorems contains the other one. Also, they do not exhaust all
values of $(k,p)$. Neither theorem makes a statement about the prime
$2$. The methods of the proof of both theorems can be used to derive
vanishing theorems for all odd dimensions (and the method of
\ref{wumethod} gives a result about the prime $2$ as well), but here
we confine ourselves to the case of dimension $3$.

\begin{proof}[Proof of Theorem \ref{adamsmethod}:]
The symbol of the odd signature operator $\thom (\sigma) \in
K^{-1}(\MTSO(3))$, when considered as a map $\MTSO(3) \to
\Sigma^{-1}K$, can be lifted to connective $K$-theory, i.e. to a map

\[
\kappa:\MTSO(3) \to \Sigma^{-3} \mathbf{k}.
\]

The composition $\kappa \circ \alpha:B \Diff^+ (M)_+ \to \MTSO(3)
\to \Sigma^{-3} \mathbf{k}$ is still nullhomotopic. The theorem
follows from a theorem of Adams \cite{AdCh1}, \cite{AdCh2} about the
spectrum cohomology of $\mathbf{k}$. In general, the class $s_r :=
r! \ch_r \in H^{2r}(BU; \bZ)$ is not a spectrum cohomology class,
i.e. it does not lie in the image of the cohomology suspension
$H^{2r}(\mathbf{k}; \bZ) \to H^{2r}(BU; \bZ)$. The result of
\cite{AdCh1}, \cite{AdCh2} is that a certain multiple $m(r)\ch_r$
actually \emph{is} a spectrum cohomology class. The number $m(r) $
is given by $m(r):= \prod_{p} p^{[\frac{r}{p-1}]}$. The product goes
over all prime numbers and for $x \in \bR$, $[x]$ is the largest
integer which is less or equal than $x$ (thus, it involves only
primes $p$ with $p-1 \leq r$). Moreover, $u_r :=m(r) \ch_r$ is a
generator of $H^{2r}(\mathbf{k}; \bZ)\cong \bZ$, $r \geq 0$ (all
other cohomology groups of $\mathbf{k}$ are trivial). If $p$ is an
odd prime then $H^{*} (\MTSO(3); \bZ)$ has no $p$-torsion and so
$H^* (\MTSO(3); \bZ) \otimes \bF_p \cong H^* (\MTSO(3); \bF_p)$.
Therefore, if $\kappa^* (\Sigma^{-3} u_r) \in H^{2r-3} (\MTSO(3))$
reduces to a generator of $H^{2r-3}(\MTSO(3); \bF_p)$, then $\alpha:
H^{2r-3}(\MTSO(3); \bF_p) \to H^{2r-3}(B \Diff^+(M); \bF_p)$ is the
zero map. Up to powers of $2$ which we can disregard since $p$ is
assumed to be odd, $\kappa^*$ maps $\Sigma^{-3} u_{2r} \in H^{2r-3}
(\Sigma^{-3}\mathbf{k})$ to

\[
\pm \frac{B_r}{(2r)!} m(2r) (u_{-3} p_{1}^{r}).
\]

(it is not hard to derive that this class is integral from Von
Staudt's theorem and Lemma 2.1. of \cite{Pap}). This reduces to a
generator mod $p$ if $p$ does not divide $\frac{B_r}{(2r)!} m(2r)$
which is certainly the case if $p \geq 2r$ and $p$ does not divide
the numerator of $B_r$.
\end{proof}

\begin{proof}[Proof of Theorem \ref{wumethod}:]

Look at the diagram:

\begin{equation}
\xymatrix{ H^1(\MTSO(3); \bF_p)  \ar[d] & H^1(\MTSO(3); \bZ)
\ar@{>>}[l] \ar[r] \ar[d] & H^1(\MTSO(3); \bQ) \ar[d]^{0}\\
H^1(B \Diff^+ (M); \bF_p) & H^1(B \Diff^+ (M); \bZ)\ar[l]
\ar@{>->}[r] & H^1(B \Diff^+ (M); \bQ) }
\end{equation}

The right-hand vertical arrow is zero by Corollary \ref{threedim}
and the lower right horizontal arrow is injective ($H^1(X; \bZ) \to
H^1 (X; \bQ)$ is injective for any space $X$). The upper left
horizontal arrow is surjective since $p$ is odd. An easy diagram
chase shows that $H^1 (\MTSO(3); \bF_p) \to H^1 (B \Diff^+ (M);
\bF_p)$ is also trivial.

Let $\cA_p$ denote the mod $ p$-Steenrod algebra. Then the
restriction of $\alpha$ to $\cA_p \cdot H^1 (\MTSO(3))$ is trivial
because $\alpha$ is a spectrum map. Let $x := p_1 \in H^4 (BSO(3);
\bF_p)$ be the first Pontrjagin class and let $\cP$ be the total
Steenrod power operation. We will compute $\cP (\thom_{-L_3} (x))
\in H^*(\MTSO(3); \bF_p)$. This is done using a formula by Wu
(compare \cite{MS}, Thm 19.7). Let $r=\frac{1}{2}(p-1)$, $\cP^i$ has
degree $4ri$. Wu's formula states, in the present context, that

\[
\cP (\thom_{-L_3} x )= \thom_{-L_3} ((x + x^p)(1+x^r)^{-1}).
\]

Therefore $\cP^i (\thom_{-L_3} x) \in H^{1+4ri}(\MTSO(3);\bF_p)$
agrees with $\thom_{-L_3} (x^{ri+1})$, multiplied by the coefficient
of $z^{ri+1}$ in the power series

\[
(z + z^p)(1+z^r)^{-1}= \sum_{l \geq 0} (-1)^l ( z^{rl+1} +
z^{rl+p}).
\]

It is clear that this coefficient is a unit in $\bF_{p}^{\times}$.
Thus $\cP^i (\thom_{-L_3} x) \in H^{1+4ri}(\MTSO(3);\bF_p)$ is a
generator and we have argued above that $\alpha^* \cP^i
(\thom_{-L_3} x) =0$. This concludes the proof.
\end{proof}

It appears to be quite difficult to find an example of a
$3$-manifold such that $\alpha:\suspinf  B \Diff^+ (M)_+ \to
\MTSO(3) $ is nonzero in cohomology. Many computations which will
not be reproduced here suggest that $\alpha$ could very well vanish
in cohomology with arbitrary coefficients. However:

\begin{prop}
For $M= \bS^3$, the MTW-map $\suspinf BSO(4)_+ \to \MTSO(3)$ of the
universal $\bS^3$-bundle is not nullhomotopic.
\end{prop}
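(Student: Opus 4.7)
My plan is to identify the map in the statement with the map $\beta$ in the cofibration sequence
\[
\MTSO(4) \xrightarrow{\omega} \suspinf BSO(4)_+ \xrightarrow{\beta} \MTSO(3) \xrightarrow{\eta} \Sigma \MTSO(4)
\]
of Proposition \ref{cofibseq}, and then to prove $\beta$ is not nullhomotopic. The sphere bundle $BSO(3) \to BSO(4)$ arising from the linear $SO(4)$-action on $\bS^3$ is pulled back from the universal oriented $\bS^3$-bundle along the classifying map $BSO(4) \to B\Diff^+(\bS^3)$, so by naturality of the MTW-construction a nullhomotopy of the universal MTW-map would yield a nullhomotopy of $\beta$. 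Hence the proposition reduces to showing $\beta \not\simeq 0$.

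In a distinguished triangle $A \xrightarrow{f} B \xrightarrow{g} C \xrightarrow{h} \Sigma A$, the middle map $g$ is nullhomotopic if and only if $f$ admits a stable section. Applied here, $\beta \simeq 0$ iff $\omega$ admits a section $s \colon \suspinf BSO(4)_+ \to \MTSO(4)$ with $\omega \circ s \simeq \id$. I will rule this out by an obstruction in mod-$2$ cohomology, with care taken for the twisted Steenrod action on $H^*(\MTSO(4); \bF_2)$.

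Under the Thom isomorphism $\psi \colon H^{*+4}(BSO(4); \bF_2) \xrightarrow{\sim} H^*(\MTSO(4); \bF_2)$, $g \mapsto U \cdot g$, the map $\omega^*$ is multiplication by the mod-$2$ Euler class $w_4$ (from the identification of the cofibration with the Gysin sequence of $BSO(3) \to BSO(4)$). Setting $t := s^* \circ \psi$, a section $s$ would yield a $\bF_2$-linear map $t$ of degree $-4$ satisfying $t(w_4 \cdot f) = f$ for every $f \in \bF_2[w_2, w_3, w_4]$, and intertwining the twisted operation $\tilde{Sq}^i(g) := \sum_{j+k=i} \bar w_j(L_4) \cdot Sq^k(g)$ with the ordinary $Sq^i$ on the target (the twist arising from $Sq^i(U) = \bar w_i(L_4) \cdot U$ and the Cartan formula). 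A direct Wu-formula calculation, using $Sq^1 w_3 = 0$, $Sq^2 w_3 = w_2 w_3$, $Sq^3 w_3 = w_3^2$, $Sq^4 w_3 = 0$, and $\bar w_4 = w_4 + w_2^2$, gives
\[
\tilde{Sq}^4(w_3) \;=\; w_3 w_4 \in H^7(BSO(4); \bF_2).
\]
Since $t(w_3)$ lies in $H^{-1}(BSO(4); \bF_2) = 0$, combining the section relation with twisted $\cA_2$-linearity forces
\[
w_3 \;=\; t(w_4 \cdot w_3) \;=\; t\bigl(\tilde{Sq}^4 w_3\bigr) \;=\; Sq^4\bigl(t(w_3)\bigr) \;=\; 0,
\]
which contradicts $w_3 \neq 0$ in $H^3(BSO(4); \bF_2)$.

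The main obstacle is choosing the right test class. Ignoring the $\bar w(L_4)$-twist produces no contradiction whatsoever, and obvious candidates like $w_4$ itself satisfy $\tilde{Sq}^i(w_4) = 0$ for all small $i$, giving only tautologies. The essential insight is to exploit a class such as $w_3$ whose image under $t$ must vanish for degree reasons alone, while Wu's formula combined with the twist forces the twisted operation $\tilde{Sq}^4(w_3)$ to coincide, nontrivially, with $w_3 \cdot w_4$.
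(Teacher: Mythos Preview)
Your argument is correct. Both you and the paper exploit the cofibre sequence $\MTSO(4)\xrightarrow{\omega}\suspinf BSO(4)_+\xrightarrow{\beta}\MTSO(3)\xrightarrow{\eta}\Sigma\MTSO(4)$ and derive a contradiction from Steenrod-algebra compatibility of a hypothetical splitting, but the implementations differ in two genuine ways. First, you use that $\beta\simeq 0$ forces a \emph{section} of $\omega$, whereas the paper uses that it forces a \emph{retraction} of $\eta$; both are standard consequences of a null map in a triangle. Second, and more interestingly, you work at the prime $2$ with the single operation $Sq^4$ applied to $w_3$, while the paper works at the prime $3$ with the composite $\cP^3-\cP^2\cP^1$ applied to the Thom class $u_{-4}$. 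Your computation $\tilde{Sq}^4(w_3)=w_3w_4$ is correct (the two $w_2^2w_3$ contributions cancel), and the degree trick that $t(w_3)\in H^{-1}=0$ is a clean way to close the argument without needing a longer relation in the Steenrod algebra. The paper's route has the mild advantage that it works directly with the Thom class and avoids the twist $\bar w(L_4)$, at the cost of a slightly more elaborate Steenrod computation; your route is lower-degree and arguably more elementary. A small remark: since the map in the statement already has source $\suspinf BSO(4)_+$ and is by definition the MTW-map of the sphere bundle $BSO(3)\to BSO(4)$, it literally \emph{is} $\beta$, so your reduction in the first paragraph is not needed.
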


\begin{proof}[Proof (The author owes this argument to O. Randal-Wiliams):]
Recall that $\alpha$ fits into the cofibre sequence \ref{cofibseq}:

\[
\suspinf BSO(4)_+ \stackrel{\alpha}{\to} \MTSO(3)
\stackrel{\eta}{\to} \Sigma \MTSO(4).
\]

If $\alpha$ were nullhomotopic, then there exists a splitting $s:
\Sigma \MTSO(4) \to  \MTSO(3)$ of $\eta$ (i.e. $s \circ \eta =
\id$). In the sequel we assume that the map

\[
\eta^* : H^* (\MTSO(4); \bF_3) \to H^* (\Sigma^{-1}\MTSO(3); \bF_3)
\]

has a right inverse $s^*$ as a map of $\cA_3$-modules and show that
this is impossible. Let $u_{-3} $ be the Thom class of $-L_3$ and we
write $u_{-3} \cdot x$ for $\thom_{-L_3}(x)$ (recall that this is a
module structure); similarly for $\MTSO(4)$. Since $\eta^* u_{-4} =
\Sigma^{-1} u_{3}$, it follows that $s^* \Sigma^{-1} u_{-3} =
u_{-4}$ and thus that

\begin{equation}\label{steenrodcontradiction}
Q u_{-4} = \Sigma^{-1} s^* Q_{u_3}
\end{equation}

for any $Q \in \cA_3$. Put $Q = \cP^3 - \cP^2 \cP^1$. Recall the
formulae

\[
\cP^1 p_1 = p_{1}^{2} +p_2; \;\cP^2 p_1 = p_{1}^{3}; \; \cP^1
p_{1}^{2} = -p_1 (p_{1}^{2} + p_2); \; \cP^1 p_2 = p_1 p_2
\]

for the Steenrod operations on $BSO(4)$ (and hence, by putting $p_2
=0$, also on $BSO(3)$) and the formula

\[
\cP (u_4) = u_{-4} (K(p_1,p_2)),
\]

where $K$ is the multiplicative sequence associated with
$(1+x)^{-1}$. Its lowest terms are $K(p_1,p_2)= 1 - p_1 + p_{1}^{2}
- p_{1}^{3} - p_1 p_2  + \ldots$. From this, we get

\[
(\cP^3 - \cP^2 \cP^1) (u_{-4}) = u_{-4} p_1 p_2
\]

on $\MTSO(4)$. Therefore $(\cP^3 - \cP^2 \cP^1) (u_{-4}) = 0$ (just
put $p_2 =0$ and shift the degrees. This contradicts
\ref{steenrodcontradiction}.
\end{proof}

We conclude this section by asking the question:

\begin{quest}
Does there exist an oriented closed $3$-manifold $M$ and a prime
$p$, such that $\alpha_{E_M}^{*}:\tilde{H}^*(\MTSO(3);\bF_p) \to
\tilde{H}^*(B \Diff^+ (M); \bF_p)$ is nontrivial?
\end{quest}

\section{What happens for manifold bundles with
boundary?}\label{boundarysection}

In this section we study manifold bundles with boundary and ask
whether the vanishing theorem \ref{vanishing} still holds for such
bundles. We have to distinguish two cases. The first case is when we
require the boundary bundle to be trivialized. In this case, Theorem
\ref{vanishing}, interpreted appropriately, is still true. The
second case is when the boundary bundle is not required to be
trivial. In this case the generalized MMM-classes are not defined in
general and therefore the analogue of Theorem \ref{vanishing} does
not make sense, as we will discuss briefly.

\subsection{Manifold bundles with boundary}

Let $M$ be an $n$-dimensional (oriented, smooth, compact) manifold
with boundary. There are two types of smooth $M$-bundles that come
to mind.

We can study the structural group $\Diff^+ (M)$ of all
orientation-preserving diffeomorphisms, with no condition on the
boundary. We say that a bundle with structural group $\Diff^+ (M)$
and fibre $M$ has \emph{free boundary}. Or we can consider the group
$\Diff^+ (M;
\partial)$ of diffeomorphisms of $M$ that coincide with $\id$ on
a small neighborhood of $\partial M$. Bundles with this structural
groups are said to have \emph{fixed boundary}.

\subsection{The Pontrjagin-Thom construction for bundles with boundary}

Let $f:E \to B$ be a manifold bundle with boundary $\partial f
:\partial E \to B$ and of fibre dimension $n$. The isomorphism $T_v
\partial E \oplus \bR \cong T_v E|_{\partial E}$ defines a spectrum map $\eta_E :\bTh (-T_v \partial
E) \to \Sigma \bTh (-T_v E)$ that fits into a commutative diagram
(the rest of the diagram is explained below)

\begin{equation}\label{ptdiagram}
\xymatrix{
 \suspinf B_+ \ar[d]^{\PT_{\partial E}} \ar[dr]^{\stackrel{\PT_{E}}{\simeq
 \ast}}  &  \\
 \bTh (-T_v \partial E) \ar[r]^{ \eta_E}\ar[d]^{\kappa_{\partial E}} & \Sigma \bTh (-T_v E) \ar[d]^{\kappa_E}\\
\MTSO(n-1) \ar[r]^{\eta} & \Sigma \MTSO(n)\ar[d]^{x}\\
   &  \Sigma A
. }
\end{equation}

Choose an embedding $j:E \to B \times [0, \infty ) \times
\bR^{\infty-1}$ such that $\partial E = j^{-1} (B \times \{ 0 \}
\times \bR^{\infty-1})$ and $j(E) \subset B \times [0, 1 ) \times
\bR^{k-1}$. The collapse construction defines a spectrum map

\[
\PT_E:[0, \infty ] \wedge \Sigma^{\infty -1} B_+    \to \bTh (-T_v
E),
\]

here $\infty \in [0,\infty]$ serves as a basepoint. If $ t \gg 1$,
then the composition

\[
\Sigma^{\infty -1} B_+ \cong \{t \} \wedge \Sigma^{\infty -1} B_+
\to [0, \infty ]\wedge \Sigma^{\infty -1} B_+ \stackrel{\PT_E}{\to}
\bTh (-T_v E)
\]

is the constant map. On the other hand, if $t=0$, then

\[
\Sigma^{\infty -1} B_+ \cong \{0 \}_+ \wedge \Sigma^{\infty -1} B_+
\to [0, \infty ] \wedge \Sigma^{\infty -1} B_+
\stackrel{\PT_E}{\to} \bTh (-T_v E)
\]

is the composition $\eta_E \circ \PT_{\partial E}: \Sigma^{\infty-1}
B_+ \to \Sigma^{-1} \bTh (-T_v \partial E) = \bTh (-T_v E|_{\partial
E}) \to \bTh (-T_v E)$. In other words, the Pontrjagin-Thom map
$\PT_E$ can be interpreted as a nullhomotopy of the composition
$\eta_E \circ \PT_{\partial E}$, as displayed in diagram
\ref{ptdiagram}.

Let $A$ be a spectrum, $x: \MTSO(n) \to A$ a map. By composing the
nullhomotopy $\PT_E$ with $x \circ \kappa-E$, we get a nullhomotopy
$P$ of the composition $x \circ \kappa_E \circ \eta_E \circ
\PT_{\partial E}: \Sigma^{\infty-1} B_+ \to \Sigma A$.

Suppose that there is a second nullhomotopy $Q$ of the same map, but
written as the composition

\[
\suspinf B_+ \stackrel{\PT_{\partial}}{\to}  \bTh (-T_v \partial E)
\stackrel{\kappa_{\partial E}}{\to} \MTSO(n-1) \to \Sigma \MTSO(n-1)
\to \Sigma A
\]

(this is the same as $x \circ \eta \circ \alpha_{\partial E}$). Such
a nullhomotopy typically arises from a vanishing theorem concerning
the bundle $\partial E$ and it only involves $\partial E$ and some
choices that do not depend on $E$. We can glue the two
nullhomotopies $Q$ and $P$ and obtain a map $\suspinf B_+ \to A$. If
the nullhomotopy $Q$ is defined for the universal bundle $E_M \to B
\Diff^+ (M)$, we can use this procedure to define characteristic
classes of smooth $M$-bundles. More precisely, even though the map
$\alpha_{E_M}: \suspinf B \Diff^+ (M)_+ \to \MTSO(n)$ does not
exist, we can make sense out of the element $\alpha_{E_M}^{*} (x)
\in A (B \Diff^+ (M))$.

We list a few examples of situations to which the above philosophy
can be applied. In each of these cases, we would need to make the
map $x$ as well as the nullhomotopy $Q$ precise on the point-set
level. We indicate how this works in the example that is of interest
to us: the second example.

\begin{enumerate}
\item If $\partial E = \emptyset$ and $x= \id: \MTSO(n) \to \MTSO(n)$ and $Q$ is the constant nullhomotopy,
then we get of course the MTW-map $\alpha_E$ back.
\item (generalization of the first example) If $x= \id_{\MTSO(n)}$
and $E$ has fixed boundary, then $\alpha_{\partial E}: \suspinf B_+
\to \MTSO(n-1)$ factors as $\suspinf B_+ \to \suspinf \bS^0
\stackrel{\alpha_{\partial M}}{\to} \MTSO(n-1)$. But there is an
oriented nullbordism $W$ of $\partial M$ (of course, $W=M$ is a
possible choice, but there is no reason to prefer this choice). This
nullbordism induces a nullhomotopy of the composition $\suspinf
\bS^0 \stackrel{\alpha_{\partial M}}{\to} \MTSO(n-1) \to \Sigma
\MTSO(n)$. Thus we are in the above situation and hence we can
define a map $\suspinf B_+ \to \MTSO(n)$, called $\alpha_E$.
Geometrically, this corresponds to gluing in the trivial bundle $B
\times W$ into $E$ along $\partial E$. If $\hat{E}$ denotes this new
bundle, then $\alpha_E = \alpha_{\hat{E}}$. This geometric
description, together with the homotopy equivalence $\alpha^{GMTW} :
\Omega B \Cob_n \simeq \loopinf \MTSO(n)$ from \cite{GMTW}, explains
how to the make the nullhomotopy precise. Note that this
construction depends on the choice of $W$; thus $\alpha_E $ is
well-defined only modulo maps of the form $\alpha_{B \times V}$ for
constant bundles of closed manifolds. Therefore the map $\alpha_E :
H^* (\MTSO(n)) \to H^* (B \Diff^* (M))$ does not depend on the
choice of $W$ as long as we consider positive degrees $* > 0$.
\item If $A = \Sigma^k H \bZ$, $y \in H^{k} (BSO(n))$ and $\chi$ is
the Euler class, let $x = \thom (y \chi) \in H^k (\MTSO(n))$. There
is a canonical nullhomotopy of $x \circ \eta$, induced by a nonzero
cross-section of $L_n|_{BSO(n-1)}$. Thus we can apply the above
construction. This shows that, although there is no map
$\alpha_E:\suspinf B_+ \to \MTSO(n)$ for a bundle with boundary, we
can still define what ought to be the pullback $\alpha_{E}^{*} \thom
(y \chi)$, which should be the same as $f_! (y (T_v E) \chi(T_v
E))$. Recall that the Becker-Gottlieb transfer $\trf_f:\suspinf B_+
\to \suspinf E_+$ also exists when $f:E \to B$ is a manifold bundle
with boundary. One can show that $\trf_{f}^{*} (y) = \alpha_{E}^{*}
\thom (y \chi)$.
\item If $n$ is even and $x : \MTSO(n) \to K$ is the universal
symbol class of the signature operator, the vanishing theorem
implies that we can define the index of the signature on an
arbitrary bundle of even dimension. On the other hand, for odd $n$,
it turns out that the index of the signature of the boundary is an
obstruction to define the index of the odd signature operator.
\end{enumerate}

We can use the second example from above to define the MTW-map of
any bundle with fixed boundary. Since, in the above notation,
$\alpha_E$ is defined to be the MTW-map of the closed bundle
$\hat{E}$, Theorem \ref{symbolgoestonull} is still true for the new
MTW-map of a bundle with boundary. All consequences that were
derived from \ref{symbolgoestonull} by formal computations are still
true, namely Theorems \ref{vanishing}, \ref{adamsmethod},
\ref{wumethod}.

\address
\email
\end{document}